\documentclass[reqno]{amsart}
\usepackage{amsmath}
\usepackage{amssymb, amsthm, amsfonts, tikz-cd, mathrsfs,mathtools,stmaryrd,enumitem, algorithmic,float,comment,tikz,hyperref}
\usepackage[toc]{appendix}
\usetikzlibrary{backgrounds}
\usetikzlibrary{decorations.pathreplacing}
\usepackage{fullpage}
\usepackage{xcolor}

\usepackage{tikz}
\usetikzlibrary{calc,decorations.pathreplacing}

\hypersetup{
  colorlinks   = true, 
  urlcolor     = blue, 
  linkcolor    = blue, 
  citecolor   = red 
}

\usepackage{comment}

\usepackage[capitalize,nameinlink]{cleveref}
\crefname{theorem}{Theorem}{Theorems}
\crefname{lemma}{Lemma}{Lemmas}
\crefname{claim}{Claim}{Claims}
\crefname{prop}{Proposition}{Propositions}
\crefname{figure}{Figure}{Figures}

\usepackage{bbm}
\newtheorem{theorem}{Theorem}

\newtheorem{question}[theorem]{Question}

\newtheorem{lemma}[theorem]{Lemma}
\newtheorem{claim}[theorem]{Claim}

\newtheorem{conj}[theorem]{Conjecture}

\newtheorem*{claim*}{Claim}

\newtheorem*{remark*}{Remark}
\newtheorem{remark}[theorem]{Remark}

\theoremstyle{definition}
\newtheorem{definition}[theorem]{Definition}

\numberwithin{theorem}{section}

\renewcommand{\phi}{\varphi}

\renewcommand{\leq}{\le}
\renewcommand{\geq}{\ge}

\newcommand{\cV}{\mathcal V}
\newcommand{\E}{\mathbb{E}}

\def\1{\mathbbm{1}}

\newcommand{\FF}{\mathbb F}
\newcommand{\RR}{\mathbb R}

\def\eqdef{\overset{\rm{def}}{=}}

\renewcommand{\le}{\leqslant}
\renewcommand{\ge}{\geqslant}

\newcommand{\EE}{\mathbb E}

\newcommand{\PP}{\mathbb P}

\newcommand{\floor}[1]{\lfloor#1\rfloor}

\newcommand{\abs}[1]{\left|#1\right|}

\newcommand{\F}{\mathbb{F}}

\newcommand{\Ber}{\textup{Ber}}
\newcommand{\rank}{\textup{rank}}

\newcommand{\OffDiag}{\textup{OffDiag}}

\newcommand{\Span}{\textup{span}}
\newcommand{\Coef}{\textup{Coef}}
\newcommand{\dtv}{d_{\textup{TV}}}
\usepackage[
backend=biber,
style=numeric,
maxnames=99,
giveninits=true
]{biblatex}
\renewbibmacro{in:}{%
  \ifentrytype{article}{}{\printtext{\bibstring{in}\intitlepunct}}}

\title
{Vertex-minor universality of a random graph} 

\makeatletter
\newcommand\thankssymb[1]{\textsuperscript{\@fnsymbol{#1}}}
\makeatother

\author[Ting-Wei Chao]{Ting-Wei Chao\thankssymb{1}}
\author[Zixuan Xu]{Zixuan Xu\thankssymb{1}}
\thanks{Department of Mathematics, Massachusetts Institute of Technology. Email: \texttt{\{twchao,zixuanxu\}@mit.edu}}

\begin{document}

\begin{abstract}
    Given a graph $G$ and a vertex $v\in V(G)$, a local complementation at $v$ on $G$ is an operation that replaces the induced graph on the neighborhood of $v$ by its complement. A graph $H$ is a vertex-minor if $H$ can be obtained from $G$ by a sequence of vertex deletions and local complementation. A graph is said to be $k$-vertex-minor universal if it contains every $k$-vertex graph on any $k$-subset of vertices as a vertex minor. Previously, Ascoli--Frederickson--Frederickson--McFarland--Post~\cite{AFFMP26} proved that with high probability $G(n,1/2)$ is $\Omega(\sqrt{n})$-vertex-minor universal. Furthermore, they conjectured that with high probability $G(n,p)$ and $G(n,1-p)$ are $\Omega(p\sqrt{n})$-vertex-minor universal for all $\omega(1/\sqrt{n})= p\le 1/2$. In this short note, we confirm this conjecture up to an extra logarithm factor and show that this is true with probability $1-2^{-\Omega(p^2n)}$ if $\Omega(\log n/\sqrt{n})= p\le 1/2$. 
    Together with a complementary result which applies to the regime where $1/\sqrt{n}\le p\le n^{-1/3}$ produced by an internal model at OpenAI, the conjecture is fully confirmed.
    
\end{abstract}

\maketitle

\section{Introduction}

Let $G = (V,E)$ be a graph. For a vertex $v\in V$, a \emph{local complementation} at $v$ is an operation where we replace the neighborhood of $v$ by its complement. Let $G * v$ denote the resulting graph after performing local complementation at $v$. We say that two graphs are \emph{locally equivalent} if one can be obtained from the other by a sequence of local complementations. For a graph $H$ with vertex set $V(H)\subseteq V$, we say that $H$ is a vertex-minor of $G$ if $H$ can be obtained from $G$ via a sequence of vertex deletions and local complementations. Equivalently, $H$ is a vertex-minor of $G$ if $H$ is the induced subgraph on $V(H)$ of a graph locally equivalent to $G$. Since their introduction in works of Bouchet~\cite{Bouchet1988Isotropic} and Fon-Der-Flaass~\cite{FonDerFlaass1988Local} in the 1980s, vertex-minors have been extensively studied in structural graph theory. In particular, local complementations preserve the cut-rank function of a graph. Thus, vertex-minors have close connections to the study of circle graphs, forbidden minors and related decomposition questions. See the survey on vertex-minors by Kim and Oum~\cite{KimOum2024VertexMinorsSurvey} for a more detailed overview of vertex-minors and related concepts.

Motivated by the connections between vertex-minors and quantum communication, a recent line of work considers the universality question for vertex-minors. A graph $G$ is \emph{$k$-vertex-minor universal} if for every vertex subset $U\subseteq V(G)$ of size $k$, every graph on $U$ is a vertex-minor of $G$. Note that this is a strong condition where every graph on $k$ vertices must be realized on any specified subset of $k$ vertices in $G$. In this direction, Claudet, Mhalla, and Perdrix~\cite{ClaudetMhallaPerdrix2023} first showed that there exists $k$-vertex-minor universal graphs with $O(k^4\log k)$ vertices and any $k$-vertex-minor universal graph must contain at least $(k-2)^2/(2\log_2 3)$ vertices. Then, Cautr\'es, Claudet, Mhalla, Perdrix, Savin, and Thomass{\'e}~\cite{CautresClaudetMhallaPerdrixSavinThomasse2024} further showed the existence of $k$-vertex-minor universal graphs with $(2+o(1))k^2$ vertices. Most recently, Ascoli, Frederickson, Frederickson, McFarland, and Post~\cite{AFFMP26} showed that almost all graphs on $n$ vertices are $O(\sqrt{n})$-vertex-minor universal by proving the following statement. Recall that $G(n,p)$ denotes the Erd\H{o}s-Reny\'i random graph on $n$ vertices with each edge present with probability $p$.

\begin{theorem}[\cite{AFFMP26}]\label{thm:1/2}
    For a positive integer $k$ and $G\sim G(n,1/2)$, if $n\ge (1+C)\frac{1}{2\log_2(4/3)}k^2$ for some $C>0$, then $G$ is $k$-vertex-minor universal with probability at least $1-2^{-(1+o(1))Ck^2/2}$.
\end{theorem}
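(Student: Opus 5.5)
The plan is to fix a $k$-set $U\subseteq V(G)$ and a target graph $H$ on $U$, bound the probability that $H$ is \emph{not} a vertex-minor of $G$ by roughly $(3/4)^{n-o(k^2)}=2^{-(1+o(1))\,n\log_2(4/3)}$, and then union bound. There are at most $\binom{n}{k}\le 2^{O(k\log n)}=2^{o(k^2)}$ choices of $U$ and $2^{\binom k2}\le 2^{k^2/2}$ choices of $H$. Since $n\log_2(4/3)\ge (1+C)k^2/2$, the total failure probability is at most $2^{k^2/2+o(k^2)-(1+C)(1+o(1))k^2/2}=2^{-(1+o(1))Ck^2/2}$, as claimed. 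Everything therefore rests on the per-$(U,H)$ estimate.

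For the per-$(U,H)$ estimate, write $W=V(G)\setminus U$ with $|W|=m=n-k$. I would use the standard fact that, up to local equivalence, a vertex $w$ can be removed in three essentially different ways: delete $w$; delete $w$ after local complementation at $w$; or delete $w$ after pivoting on an edge $wx$. I would encode these in $\mathbb F_2$ linear-algebra language via the isotropic-system / adjacency-matrix description. In that language, eliminating $W$ yields on $U$ the adjacency matrix
\[
A_{UU}+B^{\top}MB,
\]
where $B$ is the $W\times U$ bipartite adjacency block and $M$ is a symmetric matrix determined by $G[W]$ and the choices made. I would then expose $W$ one vertex at a time. The aim is a one-step lemma: conditioned on the history, the event ``we can still reach $H$ after processing $w$'' fails with probability at most $3/4$, up to an error that is negligible while the current ``defect'' $A_{UU}+B^{\top}MB+H$ is nonzero. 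The heuristic is that a fresh vertex $w$ has a uniformly random neighbourhood vector $b_w\in\mathbb F_2^U$. Local complementation at $w$ adds $b_wb_w^{\top}$ (off-diagonal), and pivoting adds a rank-two symmetric term $b_wb_x^{\top}+b_xb_w^{\top}$. The fraction of random $b_w$ for which none of these moves makes progress should be $3/4$.

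The heart of the proof, and the step I expect to be hardest, is making ``progress'' precise so that it is monotone and needs only $O(k^2)$ successful steps. The difficulty is that a move at $w$ also alters edges inside $W$ and hence the future random vectors. I would handle this by choosing a potential, namely the $\mathbb F_2$-rank of the defect matrix (viewed as an alternating form), which is at most $k$. I would process the vertices of $W$ in an order where each move only affects not-yet-exposed edges through a shift that is independent of the fresh randomness, so that the unexposed $b_{w'}$ remain uniform. With such a potential, failure after $m$ steps requires at least $m-O(k^2)$ ``non-progress'' steps, each of conditional probability at most $3/4$. Summing over the $\binom{m}{O(k^2)}$ patterns of progress steps then gives $(3/4)^{m-O(k^2)}2^{O(k\log n)}=(3/4)^{n-o(k^2)}$, since $k^2=\Theta(n)$ forces $k=o(k^2)$. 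If the rank potential turns out not to drop reliably, my fallback is a second-moment (Janson-type) count. In that version I would count the $3^m$ elimination patterns producing $H$, each of which succeeds with probability $2^{-\binom k2}$, and control the correlations between patterns by how many coordinates they share. This fallback is less likely to give the sharp constant $\log_2(4/3)$.
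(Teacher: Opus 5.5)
\paragraph{Context.} The paper does not prove this statement. It is quoted from \cite{AFFMP26}, and the only hint given is that the proof studies the random walk on graphs on $U$ generated by a random sequence of local complementations outside $U$, using that at $p=1/2$ the steps are independent.

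\paragraph{What is sound.} Your outer reduction is fine. A per-$(U,H)$ failure bound of $2^{o(k^2)}(3/4)^{n}$, followed by a union bound over $U$ and $H$, does give the stated probability. Your remark that at $p=1/2$ the current neighbourhood in $U$ of the next vertex is a fixed shift of a fresh uniform vector is the right independence property.

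\paragraph{The gap: the per-$(U,H)$ estimate.} The one-step lemma is never derived; the value $3/4$ is read off from the target constant. For the rank potential you propose, the lemma is false. Suppose the defect $D$ has rank $2t\le k/8$.
\begin{itemize}
\item A local complementation at $w$ adds $\OffDiag(b_wb_w^{\top})$. This is the adjacency matrix of a clique on $|b_w|\approx k/2$ vertices, and its rank is at least $|b_w|-1$. So the rank of the defect goes up, except with probability $2^{-\Omega(k)}$.
\item A pivot on $wx$ adds the rank-two form $b_wb_x^{\top}+b_xb_w^{\top}$. If the images of $b_w,b_x$ in $\F_2^U/\mathrm{col}(D)$ are linearly independent, then the kernel of the new matrix is $\ker D\cap b_w^{\perp}\cap b_x^{\perp}$, so the rank rises by exactly two. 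For fresh $b_w,b_x$, dependence has probability $O(2^{-(k-2t)})$.
\end{itemize}
Hence the chance of progress decays exponentially as the defect shrinks. A greedy potential argument would need exponentially many vertices, not $n=\Theta(k^2)$.

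The bookkeeping is also inconsistent. If $\Theta(k^2)$ progress steps were needed, then both $\binom{m}{\Theta(k^2)}=2^{\Theta(k^2)}$ and $(3/4)^{m-\Theta(k^2)}$ change the constant $\log_2(4/3)$. You could only afford $O(k)$ progress steps. The rank potential does not give that either, because moves you take can raise it.

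\paragraph{Where $3/4$ actually comes from.} It is not the success probability of one vertex for a fixed target. It comes from averaging over the choice of operations.
\begin{itemize}
\item Locally complement each $w_i$ independently with probability $1/2$, then delete $W$. At $p=1/2$ the increments on $U$ are $\epsilon_i\OffDiag(z_iz_i^{\top})$, where $z_i$ is uniform given the past.
\item On a single-edge character $\{uv\}$, one step has Fourier coefficient $\E(-1)^{\epsilon z_uz_v}=\frac12(1+\frac12)=\frac34$.
\item Equivalently, two independent patterns disagree in $d\sim\mathrm{Bin}(m,1/2)$ places. Each disagreement costs a factor $1/2$ on that character, and $\E 2^{-d}=(3/4)^m$.
\end{itemize}
Now let $\nu_G$ be the resulting distribution on graphs on $U$. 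If some $H$ is missed, then $\|\nu_G-\mu_{\textup{unif}}\|_2^2\ge 2^{-2\binom k2}$. Apply Markov's inequality at that level, bound the Fourier coefficients through the rank of the quadratic form in the fresh variables as in \cref{claim:Fourier_bound}, and use the rank counts of \cref{lemma:rank_bound}. This heuristic computation gives failure probability of order $2^{\binom k2+O(k)}(3/4)^{m}$, with the rank-two characters dominating. That is exactly the threshold $n\log_2(4/3)\approx (1+C)k^2/2$ in the statement.

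So the fallback you set aside is the route that yields the sharp constant. It is a second moment over the $2^m$ local-complementation patterns, with correlations governed by how many coordinates two patterns disagree in. In your proposal, however, it remains an unexecuted sketch.
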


Furthermore, the authors conjecture that an analogous result holds for $G(n,p)$ and $G(n,1-p)$ where $p=\omega(1/\sqrt{n})$ and $p\le 1/2$. Formally, they conjecture the following statement. 

\begin{conj}[\cite{AFFMP26}]\label{conj:all-p}
    If $p = \omega(1/\sqrt{n})$ and $p\le 1/2$ and $G\sim G(n,p)$ or $G\sim G(n,1-p)$, then $G$ is $k$-vertex-minor universal with high probability for some $k = \Omega(p\sqrt{n})$.
\end{conj}

It was remarked in \cite{AFFMP26} that the method used to prove \cref{thm:1/2} cannot be naturally extended to other values of $p\ne 1/2$. This is because \cref{thm:1/2} is proved by considering the random walk on the set of graphs on a fixed subset $U$ of size $k$ produced by a random sequence of local complementations at vertices outside of $U$. A crucial property used in the proof is the fact that each step of the random walk is independent when $p = 1/2$, which no longer holds for $p\ne 1/2$. Our main result is to confirm \cref{conj:all-p} up to an extra logarithm factor.

\begin{theorem}\label{thm:main}
    There exists a constant $n_0\in\mathbb{N}$ such that the following holds for all $n>n_0$.
    Suppose that $p\in[0,1]$ and $q\eqdef \min(p,1-p)\geq 100\log n/\sqrt{n}$. For any $k\leq q\sqrt{n}/100$, and $G\sim G(n,p)$, the probability that $G$ is $k$-vertex-minor universal is at least $1-2^{-q^2n/100}$.
\end{theorem}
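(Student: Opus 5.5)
The idea is to realize a target graph on $U$ by local complementations at vertices outside $U$. Complementation at $w\notin U$ changes $G[U]$ by adding the clique on $N(w)\cap U$ over $\FF_2$. So in the $\FF_2$-space of symmetric zero-diagonal $k\times k$ matrices (dimension $\binom k2$), the edge set of $G[U]$ moves by $\OffDiag(s_ws_w^{T})$, where $s_w\in\FF_2^U$ is the \emph{current} indicator vector of $N(w)\cap U$. All vertices outside $U$ that we do not use are deleted at the end, and deleting them does not affect $G[U]$. The plan is to fix $U$ and a target graph $H$ on $U$, show that $H$ is reachable except with probability $2^{-\Omega(q^2n)}$, and then take a union bound. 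There are at most $n^k$ choices of $U$ and $2^{\binom k2}$ choices of $H$, and since $k\le q\sqrt n/100$ and $q^2n\ge 10^4\log^2 n$, both $k\log n$ and $k^2$ are at most $q^2n/1000$. So a failure bound of $2^{-q^2n/50}$ for each fixed pair suffices.

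For fixed $U,H$, I would reveal the vertices $w_1,w_2,\dots$ of $V\setminus U$ one at a time and decide adaptively whether to complement at $w_i$ or skip it. The key observation is that after complementations at some earlier set $A\subseteq\{w_1,\dots,w_{i-1}\}$, the current $U$-neighbourhood of $w_i$ equals its original random vector $s^{0}_{w_i}$ plus a vector determined by the revealed edges between $w_i$ and $A$ together with the history. Complementation at $a\in A$ toggles $w_i$'s edges to $N(a)$ exactly when $w_i\in N(a)$. Hence, conditioned on everything revealed so far, the current $s_{w_i}$ is a fixed shift of a vector with i.i.d.\ $\mathrm{Ber}(p)$ coordinates.

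The basic anti-concentration step is then the following. For any nonzero linear functional $\lambda$ on the space, $\lambda(\OffDiag(ss^T))=\sum_{\{a,b\}\in S}s_as_b$ is a nonzero quadratic form in the shifted vector $s$. Such a form takes each value in $\FF_2$ with probability at least $c\,q^2$. To see this, pick a pair $\{a,b\}\in S$, condition on all other coordinates, and note that the form is then a nonconstant polynomial in $s_a,s_b$ of degree at most $2$, so each value has probability $\ge q^2$.

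The main obstacle is turning this into a reachability statement, because the increments are not chosen freely: using $w_i$ alters the vectors of later vertices. I would handle this with a two-phase greedy argument. In phase one I keep the current difference $D=H\triangle G[U]$ in view and build a span. Complement at $w_i$ exactly when $\OffDiag(s_{w_i}s_{w_i}^T)$ lies outside the span $V_i$ of the increments used so far. By the anti-concentration bound, each step enlarges the span with probability $\ge cq^2$ until it is full, so a Chernoff bound over $n/2$ vertices gives full span after $\binom k2$ successes with failure probability $2^{-\Omega(q^2n)}$. This uses $k^2\ll q^2n$.

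Full span alone does not let us pick a subset, since the increments depend on the order. In phase two I would therefore work with fresh vertices and drive $D$ to zero one coordinate of a basis at a time. Using the dual basis $\lambda_1,\dots,\lambda_m$ of the current span, I would process fresh vertices and complement at $w$ only if doing so decreases the number of $j$ with $\lambda_j(D)\ne0$. This is too weak as stated, and the first thing I would try instead is more targeted: for each remaining nonzero pair coordinate $\{a,b\}$ of $D$, wait for a fresh vertex whose current $U$-vector is supported on a small set containing $a,b$. Then I would cancel the collateral pairs using vertices found in phase one whose vectors are nearly disjoint from the new one. I expect this cancellation step to be where the real work lies, and where the $\log n$ loss in the threshold for $q$ enters.
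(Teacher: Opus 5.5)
\textbf{There is a genuine gap: phase two.} Your phase one is sound. Conditioned on the history, the current $U$-vector of a fresh vertex is a fixed shift of an i.i.d.\ $\Ber(p)$ vector, and combined with the $q^2$ anti-concentration for a nonzero quadratic form, this gives a spanning family of increments except with probability $2^{-\Omega(q^2n)}$.

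But spanning says nothing about reachability. Every complementation you perform changes the $U$-neighbourhood of each later vertex of $W$ adjacent to it. Complementing again at a phase-one vertex uses its \emph{current} neighbourhood. So the set of increments you can still realise is not the span of any fixed family.

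The targeted repair you sketch also cannot work in the dense part of the range. Take $q$ bounded below by a constant and $k=\lfloor q\sqrt n/100\rfloor=\Theta(\sqrt n)$. Each coordinate of a fresh vertex's current $U$-vector is $1$ with probability at least $q$, so its support has size $\Omega(k)$ except with probability $2^{-\Omega(\sqrt n)}$. Consequently, among the at most $n$ fresh vertices none has small support. Any vertex you use then creates $\Omega(k^2)$ collateral pairs, so the cancellation step is as hard as the original problem.

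This dependence between successive steps when $p\neq 1/2$ is exactly the obstruction the paper cites for why the random-walk argument of \cite{AFFMP26} does not extend. Nothing in your plan overcomes it, so you have no argument that a fixed $H$ is reached with probability $1-2^{-\Omega(q^2n)}$.

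\textbf{The missing idea is to avoid steering toward a fixed $H$ at all.} The paper's route is as follows.
\begin{itemize}
\item It takes $U\supseteq S$ with $|U|=\lfloor q^2n/12\rfloor\gg k^2$ and complements at \emph{every} vertex of $W=V\setminus U$, in any order.
\item It shows $G'[U]$ is within $2^{-q^2n/12}$ of $G(U,1/2)$ in total variation (\cref{lem:close-to-uniform}).
\item The change on $U$ is $\OffDiag(XMX^T)$, where $X$ is the biadjacency matrix of $G[U,W]$ and $M=L^{-1}(L^{-1})^T$ is invertible and depends only on $G[W]$ (\cref{claim:M}).
\item Hence the Fourier coefficient at a test graph $F$ is $\EE[(-1)^{f(X)}]$ for a quadratic $f$ with $\Coef_2(f)=A_F\otimes M$, which has rank $r\cdot\rank(F)$.
\item \cref{lem:1} upgrades your single-step $q^2$ bound to the decay $(1-2q^2)^{\lfloor r\,\rank(F)/6\rfloor}$. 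Together with the bound of at most $2^{sa-2}$ graphs of rank $a$, this makes the sum over all nonzero characters $2^{-\Omega(q^2n)}$.
\item Reachability of every graph on $S$ is then imported as a black box from \cref{thm:1/2} applied to $G'[U]$. The sequential-dependence problem therefore only has to be solved in the uniform case, where the steps are independent.
\end{itemize}
The union bound is then taken over the $\binom nk$ sets $S$ alone.
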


Our main technical contribution is \cref{lem:close-to-uniform}, which formalizes the following intuition: Let $G\sim G(n,p)$ and fix a set of $\Omega(k^2)$ vertices $U\subseteq V(G)$. Let $W \eqdef V(G)\setminus U$. Then after a sequence of local complementations at $\{w_1,\dots, w_r\}= W$, the distribution of the induced subgraph $(G * w_1 * \dots * w_r)[U]$ should be close to $G(U,1/2)$ in total variation distance. Indeed, if $W$ is an independent set, any edge $uv$ with $u,v\in U$ is toggled during local complementation at $w_i$ if $u,v\in N(w_i)$. Furthermore, since $W$ is an independent set, local complementation at $w_j$ for $j < i$ does not toggle the edges between $w_i$ and $U$. So, the event that $u,v\in N(w_i)$ holds with probability $p^2$ and only depends on the edges between $w_i$ and $U$. Moreover, the events that $u,v\in N(w_i)$ are independent for all $w_i$ where $i\in [r]$. Therefore, by a direct calculation, the probability that $uv$ is an edge in $(G * w_1 * \dots * w_r)[U]$ is $\frac{1}{2}\pm 2^{-\Omega(q^2r)}$. The main difficulty is to generalize this observation from the distribution over a single edge $uv$ to the distribution over all possible graphs on $U$, and to deal with the dependencies between local complementations at $w_1,\dots,w_r$ when $W$ is not an independent set in $G$.
Once we have proved \cref{lem:close-to-uniform}, we can prove \cref{thm:main} by applying \cref{thm:1/2} to $(G * w_1 * \dots * w_r)[U]$ in a black-box way. 

\medskip \noindent\textbf{Update.} After we put our previous version of this note on arXiv, the following complementary result solving \cref{conj:all-p} in the regime where $p$ is close to $0$ or $1$ was generated by an internal model of OpenAI, and communicated to us by Mehtaab Sawhney.

\begin{theorem}\label{thm:openai}
For $n$ sufficiently large, let $p\in (0,1)$ satisfy $1/\sqrt{n}\leq p\leq n^{-1/3}/2$ and set $k = \floor{p\sqrt n}$. Then the following statements hold with probability at least $1-2e^{-pn/200}$:
\begin{enumerate}[label=(\alph*)]
    \item if $G \sim G(n,p)$, then $G$ is $k$-vertex-minor universal;\label{thm:openai_part_a}
    \item if $G \sim G(n,1-p)$, then $G$ is $k$-vertex-minor universal.\label{thm:openai_part_b}
\end{enumerate}
\end{theorem}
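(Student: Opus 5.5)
The plan for \ref{thm:openai_part_a} is a direct construction rather than a random-walk argument. Fix $U$ with $|U|=k$ and a target graph $H$ on $U$. For each pair $\{u,v\}\subseteq U$ call $w\in V\setminus U$ a \emph{$uv$-gadget} if $N(w)\cap U=\{u,v\}$. Local complementation at $w$ toggles exactly the pair $uv$ inside $U$; the edges it toggles outside $U$ are irrelevant to $G[U]$.

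Now suppose $S$ is an independent set in $G$ containing exactly one gadget $w_{uv}$ for each pair $uv\in E(G[U])\triangle E(H)$. Performing local complementations at the vertices of $S$ in any order never alters the edges from a not-yet-used $w'\in S$ to $U$, since $w'\notin N(w)$. So the result, restricted to $U$, is exactly $H$. Hence it suffices to show that with probability at least $1-e^{-pn/200}$, for every $k$-set $U$ there is an independent transversal of the gadget classes $\{C_{uv}\}_{uv\in\binom U2}$. This handles all $H$ at once.

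For a fixed $U$ the classes $C_{uv}$ are automatically disjoint. Each vertex outside $U$ lies in $C_{uv}$ independently with probability $p^2(1-p)^{k-2}\ge p^2/2$, using $pk\le p^2\sqrt n\le n^{-1/6}$. I would then invoke Haxell's independent transversal theorem: if every class has size at least $2\Delta$, where $\Delta$ is the maximum degree of $G$ restricted to $\bigcup C_{uv}$, then an independent transversal exists.

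Heuristically the classes have size about $p^2n$. A gadget has about $p\cdot|\bigcup C_{uv}|\approx p\cdot k^2p^2n\approx p^5n^2$ neighbours among the other gadgets. The ratio is therefore $p^2 n/(p^5 n^2)=1/(p^3n)\ge 8$, and this is exactly where the hypothesis $p\le n^{-1/3}/2$ enters. To make this rigorous, I would expose the $U$--$(V\setminus U)$ edges first; these determine the classes. Then I would expose the edges inside $V\setminus U$, which are independent of the classes, and bound $\Delta$ by Chernoff. If $\Delta$ is inconvenient, I would pass to sub-classes of a fixed size $m\approx p^2n/4$, so that a degree bound of order $p\cdot k^2 m$ plus a Chernoff tail suffices. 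Small cases such as $k\le 1$ are trivial.

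For \ref{thm:openai_part_b}, I would reduce to the sparse case by one local complementation. In $G\sim G(n,1-p)$, choose a vertex $x\notin U$ adjacent to all of $U$; each $x$ qualifies independently with probability $(1-p)^k\ge 1/2$. Conditioned on the edges at $x$, the graph $(G*x)[N(x)]$ is exactly distributed as $G(|N(x)|,p)$ with $|N(x)|\ge(1-2p)n$. Since $U\subseteq N(x)$, I would apply the argument of part \ref{thm:openai_part_a} inside $N(x)$ and finally delete everything outside $U$. To make this uniform over all $U$, I would fix a few candidate vertices $x$ in advance and run the part~\ref{thm:openai_part_a} argument for each of them.

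The main obstacle is the union bound over the $\binom nk\approx e^{k\log n}$ choices of $U$, against a target failure probability of $e^{-pn/200}$. The lower-tail bound $\PP(|C_{uv}|<p^2n/4)\le e^{-\Omega(p^2n)}=e^{-\Omega(k^2)}$ alone is too weak when $k$ is small. To fix this, I would first show that there are few bad pairs, or bad vertices, globally. Low-degree or high-codegree events have tails $e^{-\Omega(pn)}$, which beats $n^{k}$ because $pn\ge k\sqrt n\gg k\log n$. Then I would argue deterministically on the good event that every $U$ has large, well-spread gadget classes. In other words, I would replace the class-size condition by a statement about all vertices' neighbourhood sizes and pairwise codegrees in $G$, which concentrate with exponent $pn$ rather than $p^2n$. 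I expect this step, converting the per-$U$ counting into global pseudo-randomness with failure probability $e^{-\Omega(pn)}$, to need the most care.
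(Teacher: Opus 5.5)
There is a genuine gap in part (a): single-vertex gadgets are far too scarce in this range of $p$, and the failure is in the construction itself, not only in the union bound. Every $w\in C_{uv}$ is a common neighbour of $u$ and $v$. The number of common neighbours of a fixed pair is $\textup{Bin}(n-2,p^2)$, with mean $p^2n\in[k^2,(k+1)^2)$. Your sufficient condition needs every class $C_{uv}$ to be nonempty; this is already forced by an $H$ that differs from $G[U]$ only in the pair $uv$. So for $k\ge 2$ it requires every pair of vertices of $G$ to have a common neighbour. For a fixed pair this fails with probability $(1-p^2)^{n-2}\ge e^{-2p^2n}$. Since $p\le n^{-1/3}/2$ gives $2p^2n\le n^{-1/3}\cdot pn$, this is vastly larger than the allowed $2e^{-pn/200}$. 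At the bottom of the range the condition fails outright. For $p=2/\sqrt n$ (so $k=2$) the codegree of a pair is asymptotically Poisson$(4)$, so about $e^{-4}\binom n2$ pairs have no common neighbour at all. Your proposed repair cannot fix this. Degree lower bounds and codegree \emph{upper} bounds do have tails $e^{-\Omega(pn)}$, but you need a codegree \emph{lower} bound, and its tail is genuinely $e^{-\Theta(p^2n)}$. No deterministic argument from degree and codegree upper bounds produces a common neighbour of $u$ and $v$, and a single bad pair defeats every $U$ containing it.

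The missing idea is to lengthen the gadget to a path $u,a,b,v$ whose inner vertices have no other neighbours in the set $F$ of already committed vertices; this is the paper's clean subdivision. Local complementation at $a$ and then at $b$ toggles $uv$ and otherwise only changes edges at $a$ and $b$.
\begin{itemize}
\item The candidates for $a$ form the set $A_u$ of vertices outside $F$ whose unique neighbour in $F$ is $u$. This set has size about $pn$ with lower tail $e^{-\Omega(pn)}$, because $|F|p\le k^2p\le p^3n\le 1/8$. The same holds for $A_v$.
\item The probability that $A_u$ and $A_v$ span no edge is only $e^{-\Omega(p^3n^2)}\le e^{-\Omega(pn)}$, since $p^2n\ge 1$.
\item Requiring this for all triples $(\{u,v\},S,F)$ with $|F|\le k^2$ costs a union bound of $n^{2k^2}=e^{O(p^2n\log n)}$. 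This is absorbed because $p\log n=o(1)$.
\item The gadgets can then be chosen greedily, so Haxell's theorem is not needed.
\end{itemize}
Your reduction for part (b), one local complementation at a vertex adjacent to everything relevant, is essentially the paper's argument. The paper finds $R$ in $\overline G$ and complements at a common neighbour of $S\cup R$. It goes through once (a) is repaired, with one caveat. A fixed set of at most $k$ candidates $x$ cannot work, since a $k$-set $U$ may contain all of them. Instead, take for each $U$ any $x$ adjacent to all of $U$. Such $x$ exist for all $U$ simultaneously with probability at least $1-n^k2^{-(n-k)}$. Then apply (a) to all $n$ graphs $(G*x)[N(x)]$ by a union bound.
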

The above theorem is proved using an argument fundamentally different from ours, which only applies when the graph is sufficiently sparse or sufficiently dense. We will present the proof of \cref{thm:openai} in \cref{appendix:openai}.

Combining \cref{thm:main} and \cref{thm:openai}, the conjecture by Ascoli--Frederickson--Frederickson--McFarland--Post~\cite{AFFMP26} is now confirmed in all regime.

\medskip \noindent\textbf{Notations.} For a graph $G$ and a subset $U\subseteq V(G)$, we use $G[U]$ to denote the induced subgraph of $G$ on $U$. For vertex sets $A,B\subseteq V(G)$, we use $G[A,B]$ to denote the induced bipartite graph of $G$ on $A\times B$. For sets $A$ and $B$, we denote their symmetric difference as $A\triangle B \eqdef (A\setminus B)\cup (B\setminus A)$. For $p\in [0,1]$, we use $x\sim \Ber(p)$ to denote that $x$ is a Bernoulli random variable taking value $1$ with probability $p$ and $0$ with probability $1-p$. For any matrix $A$, we use $A_{ij}$ to denote the $(i,j)$-entry of $A$. For an $m\times n$ matrix $A$ and a $m'\times n'$ matrix $B$, the tensor product $A\otimes B$ is the $(mm')\times(nn')$ matrix indexed by pairs $(i,i')\in [m]\times[m']$ and $(j,j')\in [n]\times[n']$ with entries $(A\otimes B)_{(i,i'),(j,j')} = A_{ij}B_{i'j'}$. For a matrix $M\in \F_2^{m\times n}$, we use $\rank(M)$ to denote its rank over $\F_2$.

\medskip \noindent \textbf{Paper Organization.} In \cref{sec:prelim}, we present a preliminary lemma that will be used in proving \cref{thm:main}. Then in \cref{sec:proof}, we give the proof of \cref{thm:main}. In \cref{sec:conclude}, we include some concluding remarks about applying our techniques to other related settings. In \cref{appendix:openai}, we include the proof of \cref{thm:openai}. 

\section{Preliminary}\label{sec:prelim}

In this section, we present useful definitions and lemmas that will be used in our proof of \cref{thm:main}. We start with presenting a useful lemma that gives an upper bound for $\abs{\E[(-1)^{f(x_1,\dots, x_m)}}$ where $x_1,\dots, x_m$ are independent $\Ber(p)$ random variables and $f(x_1,\dots,x_m)$ is a multilinear polynomial of degree at most $2$ over $\F_2$ in terms of the rank of the coefficient matrix of degree $2$ terms in $f$. For any multilinear polynomial $f\in\FF_2[x_1,\dots,x_m]$ with $\deg f\leq 2$, we use $\Coef_2(f)$ to denote the $m\times m$ coefficient matrix over $\F_2$ with entries $(a_{ij})_{i,j=1}^m$ that records the degree $2$ coefficient of $f$. Namely, $a_{ij}=1$ if $x_ix_j$ is a monomial in $f$ and $a_{ij}=0$ otherwise. In particular, since $f$ is multilinear, every entry on the diagonal of $\Coef_2(f)$ is $0$. 

\begin{lemma}\label{lem:1}
    Let $p\in [0,1]$ and $q\eqdef\min(p,1-p)$. Let $r$ be a positive integer and $X\in \F_2^m$ be a vector with independent $\Ber(p)$ entries. Let $f\in\FF_2[x_1,\dots,x_m]$ be a multilinear polynomial with $\deg f\leq 2$. Suppose that $\rank(\Coef_2(f))=r$. Then we have
    \[\abs{\E[(-1)^{f(X)}]}\le (1-2q^2)^{\lfloor r/6\rfloor}.\]
\end{lemma}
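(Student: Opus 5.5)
The plan is to reduce to a product of one-variable (or two-variable) bounds by conditioning on a carefully chosen set of variables. Write $A\eqdef\Coef_2(f)$. It is a symmetric $\F_2$-matrix with zero diagonal and $\rank(A)=r$. We view $A$ as the adjacency matrix of a graph on $[m]$.

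\textbf{The basic estimate.} For a single $x\sim\Ber(p)$ and a constant $y\in\F_2$,
\[\bigl|\E[(-1)^{xy}]\bigr|=1 \text{ if } y=0, \qquad \bigl|\E[(-1)^{xy}]\bigr|=|1-2p|=1-2q \text{ if } y=1.\]
Similarly, for independent $x,y\sim\Ber(p)$ and constants $\alpha,\beta$, write $xy+\alpha x+\beta y=(x+\beta)(y+\alpha)+\alpha\beta$. The shifted variables are $\Ber(p)$ or $\Ber(1-p)$, so
\[\bigl|\E[(-1)^{xy+\alpha x+\beta y}]\bigr|=|1-2P|\]
with $q^2\le P\le(1-q)^2$, hence this is at most $1-2q^2$.

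\textbf{The structure to extract.} We want disjoint index sets $P=\{a_1,\dots,a_t\}$ and $Q=\{b_1,\dots,b_t\}$ with $t\ge\lfloor r/6\rfloor$ such that:
\begin{enumerate}[label=(\roman*)]
\item $P$ is independent in $A$, i.e.\ $A[P]=0$;
\item $A[P,Q]$ is, after ordering, lower unitriangular: $A_{a_sb_s}=1$ and $A_{a_sb_{s'}}=0$ for $s'<s$.
\end{enumerate}

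\textbf{Conditioning argument.} First condition on all variables outside $P\cup Q$. Since $P$ is independent, $f$ becomes
\[f=\sum_s x_{a_s}y_s+g(x_Q),\]
where each $y_s=x_{b_s}+\sum_{s'>s}A_{a_sb_{s'}}x_{b_{s'}}+c_s$ is affine in $x_Q$ and the $c_s$ are constants. Next condition on $x_Q$. The $x_{a_s}$ are independent, so by the single-variable estimate
\[\bigl|\E[(-1)^f]\bigr|\le \E_{x_Q}\prod_s\bigl(1-2q\,\1[y_s=1]\bigr).\]
The term $g(x_Q)$ disappears because we take absolute values inside.

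Now reveal $x_{b_t},x_{b_{t-1}},\dots,x_{b_1}$ in that order. By triangularity, given the later coordinates, $y_s=x_{b_s}+(\text{fixed})$, so $\Pr[y_s=1\mid\cdot]\ge q$. Each factor therefore contributes at most $1-2q\cdot q=1-2q^2$ in conditional expectation, and the bound $(1-2q^2)^t$ follows. Finally, both conditionings keep us within the given distribution, because the $x_i$ are independent.

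\textbf{Main obstacle: extracting $P,Q$ with $t\ge r/6$.} Note that we cannot simply ask for an induced matching. The complete graph has full rank but induced matching number $1$. This is why $Q$ is allowed to be arbitrary inside and only the $P$--$Q$ incidences are controlled.

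I would proceed in two steps.
\begin{enumerate}[label=(\arabic*)]
\item Find an independent set $P_0$ with $\rank A[P_0,[m]\setminus P_0]\ge r/3$. To do this, take a random or greedy $2$-colouring refinement of a basis of rows. Alternatively, use a symplectic basis: $A$ has $r/2$ hyperbolic pairs, and we keep one side of a third of them so that independence is forced.
\item Inside the $P_0\times([m]\setminus P_0)$ block, of rank $\rho\ge r/3$, greedily extract a unitriangular $t\times t$ submatrix with $t\ge\rho/2$. At each step choose a pivot $(a,b)$ with $A_{ab}=1$. Then restrict to rows $a'$ with $A_{a'b}=0$, or alternatively keep $b$ and absorb the rows with a $1$ in column $b$ into later pivots. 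The goal is to control the rank loss so that it is at most $2$ per pivot.
\end{enumerate}
This rank bookkeeping, which is the source of the constant $6=3\cdot2$, is the step I expect to be delicate. If the greedy column-restriction loses too much, I would instead choose the pivots via an $LU$-type elimination that uses only row and column permutations. The triangularity condition is one-sided, and that is exactly what such an elimination provides.
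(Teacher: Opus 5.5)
Your conditioning argument is correct \emph{given} the pair $(P,Q)$: conditioning on everything outside $P\cup Q$ and then on $x_Q$, with $P$ independent and $A[P,Q]$ triangular, does yield $(1-2q^2)^t$. The gap is that such a pair with $t\ge\lfloor r/6\rfloor$ need not exist. The ``main obstacle'' is therefore not delicate bookkeeping; the claim you need is false.

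Take $f=\sum_{i<j}x_ix_j$, i.e.\ $A=J_m-I_m$ with $m$ even. Over $\F_2$ one has $(J_m-I_m)^2=I_m$, so $r=m$. Yet every independent set of $K_m$ is a single vertex, so condition (i) forces $t\le 1$. Your step (1) also fails here, since $\rank A[P_0,[m]\setminus P_0]\le 1$.

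Step (2) fails even when step (1) succeeds. Take the crown graph ($K_{n,n}$ minus a perfect matching, $n$ even), where $r=2n$. One side is an independent set whose block $J_n-I_n$ has rank $n=r/2$. But each row of $J_n-I_n$ has exactly one zero, so no pivot row can vanish on two earlier pivot columns, and there is no $3\times 3$ triangular submatrix. Running through the three types of independent sets of the crown graph (subsets of one side, or a pair $\{a^{(i)},b^{(i)}\}$) shows $t\le 2$ for every choice of $(P,Q)$.

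The underlying problem has two parts. First, your method only ever \emph{restricts} $A$ to coordinate subsets, and the product $\Ber(p)$ measure is not invariant under the basis changes (symplectic bases, $LU$ with row operations) that would be needed to move rank into triangular form. Second, by taking absolute values after conditioning on $x_Q$ you discard the phase $(-1)^{g(x_Q)}$, which is exactly where the cancellation for dense $A$ such as $K_m$ lives.

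The paper avoids this with an induction that keeps signs. Pick $a_{12}=1$ and write $f=x_1x_2+x_1f_1+x_2f_2+f_3$, with $f_1,f_2$ affine and $f_3$ quadratic in $x_3,\dots,x_m$. For $p\le 1/2$, averaging over $X_1,X_2$ gives the exact identity
\begin{align*}
\E[(-1)^{f}]&=(1-2p)\,\E[(-1)^{f_3}]+p(1-2p)\,\E[(-1)^{f_3+f_1}]\\
&\quad+p(1-2p)\,\E[(-1)^{f_3+f_2}]+2p^2\,\E[(-1)^{f_3+f_1f_2}].
\end{align*}
Its coefficients are nonnegative and sum to $1-2p^2$. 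For $p>1/2$ there is an analogous identity whose coefficients have absolute values $1-2q$, $q(1-2q)$, $q(1-2q)$, $2q^2$.

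The decisive term is the last one. Multilinearizing $f_1f_2$ \emph{adds} a quadratic form with coefficient matrix $\alpha^T\beta+\beta^T\alpha$, of rank at most $2$. Hence every term has $\rank\Coef_2\ge r-6$, and the induction closes. This modification of the quadratic form, rather than restriction to coordinates, is what handles examples like $K_m$ and the crown graph, and it is the idea your proposal is missing.
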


\begin{proof}
    We prove by induction on $r$. In the base case where $r\leq 6$, the statement is trivial since the right hand side is $1$.

    Now suppose the statement is true for all integers less than $r$ and $r\geq 6$. Let $X=(X_1,\dots,X_m)\in \F_2^m$ be a vector with independent $\Ber(p)$ entries.
    Let $f\in\FF_2[x_1,\dots,x_m]$ be a multilinear polynomial with $\deg f\leq 2$ such that $\rank(\Coef_2(f))=r$. Let $\Coef_2(f)=(a_{ij})_{i,j=1}^m$ and we can further assume without loss of generality that $a_{12}=1$. Then we can write $f$ as
    \[f=x_1x_2+x_1f_1+x_2f_2+f_3,\]
    where $f_1,f_2,f_3\in \FF_2[x_3,\dots,x_m]$ such that $\deg f_1,\deg f_2\leq 1$, $\deg f_3\leq 2$, and $f_3$ is multilinear. Note that $\Coef_2(f_3)$ is a minor of $\Coef_2(f)$ obtained from removing the first two rows and columns from $\Coef_2(f)$. Therefore, we have $\rank(\Coef_2(f_3))\geq r-4$. Denote $X'=(X_3,\dots,X_m)$ and we have 
    \[\EE[(-1)^{f(X)}] = \EE_{X'}\left[(-1)^{f_3(X')}\EE_{X_1,X_2}[(-1)^{X_1X_2+X_1f_1(X')+X_2f_2(X')}\mid X']\right].\]

    We first prove the case $q=p\leq 1/2$.
    By a direct calculation, for any $a,b\in\FF_2$ and since $X_1,X_2\sim \Ber(p)$ are independent, we have
    \begin{align}\label{eq:Exp}
        \EE_{X_1,X_2}[(-1)^{X_1X_2+aX_1+bX_2}] = (1-p)^2 + p(1-p)((-1)^a+(-1)^b)-p^2(-1)^{a+b}.
    \end{align}
    Using the identity that 
    \[(-1)^{a+b}= 1+((-1)^a + (-1)^b)-2(-1)^{ab},\]
    we obtain
    \begin{equation}
        \EE_{X_1,X_2}[(-1)^{X_1X_2+aX_1+bX_2}] = (1-2p) + p(1-2p)((-1)^a+(-1)^b)+2p^2(-1)^{ab}.
    \end{equation}
    Therefore, we can write
    \begin{align*}
        \EE[(-1)^{f(X)}] =&\,\EE_{X'}\left[(-1)^{f_3(X')}\left((1-2p) + p(1-2p)((-1)^{f_1(X')}+(-1)^{f_2(X')})+2p^2(-1)^{f_1(X')f_2(X')}\right)\right]\\
        =&\,(1-2p)\EE_{X'}\left[(-1)^{f_3(X')}\right]+p(1-2p)\EE_{X'}\left[(-1)^{f_3(X')+f_1(X')}\right]\\
        &+p(1-2p)\EE_{X'}\left[(-1)^{f_3(X')+f_2(X')}\right]+2p^2\EE_{X'}\left[(-1)^{f_3(X')+f_1(X')f_2(X')}\right].
    \end{align*}
    Now, we can apply the inductive hypothesis to bound each term. Recall that we have $\rank(\Coef_2(f_3))\geq r-4$. Therefore, we obtain 
    \[\abs{\EE_{X'}\left[(-1)^{f_3(X')}\right]}\leq (1-2p^2)^{\lfloor(r-4)/6\rfloor}\leq (1-2p^2)^{\lfloor r/6\rfloor-1}.\]
    Furthermore, since $\deg f_1, \deg f_2\le 1$, we have $\Coef_2(f_3+f_1)=\Coef_2(f_3+f_2)=\Coef_2(f_3)$, which has rank at least $r-4$. Thus, we can conclude similarly that
    \[\abs{\EE_{X'}\left[(-1)^{f_3(X')+f_1(X')}\right]}\leq (1-2p^2)^{\lfloor r/6\rfloor-1}\text{ and }\abs{\EE_{X'}\left[(-1)^{f_3(X')+f_2(X')}\right]}\leq (1-2p^2)^{\lfloor r/6\rfloor-1}.\]
    Finally, note that the homogeneous degree $2$ part of $f_1f_2$ is $\sum_{i,j=3}^m\alpha_i\beta_jx_ix_j$ if $f_1=\sum_{i=3}^m\alpha_ix_i+c_1$, and $f_2=\sum_{j=3}^m\beta_jx_j+c_2$. Consider the multilinearization $g$ of $f_1f_2$, i.e. the polynomial $g$ obtained by replacing all the terms $x_i^2$ by $x_i$ in $f_1f_2$. It follows that the coefficient of $x_ix_j$ in $g$ is $\alpha_i\beta_j+\alpha_j\beta_i$ if $i\neq j$. Therefore, we have
    \[\Coef_2(g)=\alpha^T\beta+\beta^T\alpha\]
    where $\alpha=(\alpha_3,\dots,\alpha_m)$ and $\beta=(\beta_3,\dots,\beta_m)$. Since the diagonal terms of $\alpha^T\beta+\beta^T\alpha$ are always $0$, they agree with the diagonal terms of $\Coef_2(g)$.
    Therefore, we have $\rank(\Coef_2(g))\leq 2$. Hence, we obtain that $\rank(\Coef_2(f_3+g))\geq r-6$. By the inductive hypothesis, we have
    \[\abs{\EE_{X'}\left[(-1)^{f_3(X')+f_1(X')f_2(X')}\right]}=\abs{\EE_{X'}\left[(-1)^{f_3(X')+g(X')}\right]}\leq (1-2p^2)^{\lfloor(r-6)/6\rfloor} = (1-2p^2)^{\lfloor r/6\rfloor-1}.\]
    Combining the above inequalities and the fact that $p,(1-2p)\geq 0$ for $p\le 1/2$, we obtain
    \[\abs{\EE[(-1)^{f(X)}]}\leq \left((1-2p)+p(1-2p)+p(1-2p)+2p^2\right)(1-2p^2)^{\lfloor r/6\rfloor-1}=(1-2p^2)^{\lfloor r/6\rfloor}.\]

    For the case $p\geq 1/2$ and $q=1-p$, we can combine \cref{eq:Exp} with the identity
    \[1=(-1)^{a+b}-((-1)^a + (-1)^b)+2(-1)^{ab}\]
    and conclude that
    \begin{align*}
        \EE_{X_1,X_2}[(-1)^{X_1X_2+aX_1+bX_2}] =&-(2p-1)(-1)^{a+b}+ (1-p)(2p-1)((-1)^a+(-1)^b)+2(1-p)^2(-1)^{ab}\\
        =& -(1-2q)(-1)^{a+b}+q(1-2q)((-1)^a+(-1)^b)+2q^2(-1)^{ab}.      
    \end{align*}
    Similar as before, we have
    \begin{align*}
        &\EE[(-1)^{f(X)}]\\
        &=\EE_{X'}\left[(-1)^{f_3(X')}\left(-(1-2q)(-1)^{f_1(X')+f_2(X')} + q(1-2q)((-1)^{f_1(X')}+(-1)^{f_2(X')})+2q^2(-1)^{f_1(X')f_2(X')}\right)\right]\\
        &=-(1-2q)\EE_{X'}\left[(-1)^{f_3(X')+f_1(X')+f_2(X')}\right]+q(1-2q)\EE_{X'}\left[(-1)^{f_3(X')+f_1(X')}\right]\\
        &\quad+q(1-2q)\EE_{X'}\left[(-1)^{f_3(X')+f_2(X')}\right]+2q^2\EE_{X'}\left[(-1)^{f_3(X')+f_1(X')f_2(X')}\right].\\
        &\leq \left((1-2q)+q(1-2q)+q(1-2q)+2q^2\right)(1-2q^2)^{\lfloor r/6\rfloor-1}=(1-2q^2)^{\lfloor r/6\rfloor},
    \end{align*}
    where the last inequality follows from applying the inductive hypothesis to $f_3+f_1+f_2$, $f_3+f_1$, $f_3+f_2$, and $f_3+g$, where $g$ is the multilinearization of $f_1f_2$. We also used the fact that $q,(1-2q)\geq 0$.
    This concludes the proof.
\end{proof}

In the proof of \cref{lem:close-to-uniform}, we will also use the following upper bound on the number of graphs that has adjacency matrix of a fixed rank over $\F_2$. For a graph $F$, we use $\rank(F)$ to denote the rank of its adjacency matrix over $\F_2$.

\begin{theorem}\cite[Theorem 3]{Mac69}\label{lemma:rank_bound} For a positive integer $s$ and an integer $a\ge 0$, the number of graphs $F$ on the vertex set $[s]$ such that $\rank(F) = a$ is at most $2^{sa-2}$. 
\end{theorem}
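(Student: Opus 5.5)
Since the adjacency matrix of a graph is symmetric with zero diagonal, it is an alternating matrix over $\F_2$ and therefore has even rank. So for odd $a$ there is nothing to prove. For $a=0$ only the empty graph qualifies, and the bound $2^{-2}$ cannot hold, so I read the statement as intended for $a\ge 1$; effectively $a=2t\ge 2$. The plan is to parametrize rank-$a$ alternating matrices by a subspace together with a nondegenerate form, and then count both pieces.

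\emph{Parametrization.} Let $M$ be the adjacency matrix of $F$ and let $R\subseteq\F_2^s$ be its row space, which equals its column space by symmetry, with $\dim R=a$. The bilinear form $(x,y)\mapsto x^TMy$ on $\F_2^s$ is alternating, and its radical is $\ker M=R^\perp$. Hence it descends to a nondegenerate alternating form on the $a$-dimensional quotient $\F_2^s/R^\perp$. Conversely, the pair consisting of $R$ and this nondegenerate alternating form on $\F_2^s/R^\perp$ determines $M$. So the map $M\mapsto(R,\text{form})$ is injective.

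\emph{Counting.} The number of choices of $R$ is the Gaussian binomial
\[
\binom{s}{a}_2=\prod_{i=0}^{a-1}\frac{2^{s-i}-1}{2^{a-i}-1}\le 2^{a(s-a)}\prod_{j\ge1}(1-2^{-j})^{-1}\le 3.47\cdot 2^{a(s-a)}.
\]
The number of alternating forms on an $a$-dimensional space is at most $2^{a(a-1)/2}$, namely the free entries above the diagonal. Multiplying gives
\[
\#\{F:\rank(F)=a\}\le 3.47\cdot 2^{sa-a(a+1)/2}.
\]
For $a\ge 4$ we have $a(a+1)/2\ge 10$, and $3.47\cdot 2^{-10}<2^{-2}$, which gives the claimed bound $2^{sa-2}$.

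\emph{The tight case $a=2$.} Here the constant $3.47$ is too lossy, since $3.47/8>1/4$, so this case needs an exact count. A $2$-dimensional space carries exactly one nondegenerate alternating form over $\F_2$. Therefore the count is exactly
\[
\binom{s}{2}_2=\frac{(2^s-1)(2^s-2)}{6}\le\frac{2^{2s}}{6}\le 2^{2s-2}.
\]
This small-rank case is the only real obstacle; everything else is routine. Alternatively, one can simply cite MacWilliams' exact formula for the number of alternating matrices of rank $2t$, namely $\prod_{i=0}^{t-1}\frac{2^{2i}}{2^{2i+2}-1}\prod_{i=0}^{2t-1}(2^{s-i}-1)$, and bound it directly.
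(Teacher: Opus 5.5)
Your proof is correct, but it follows a different route from the paper. The paper does not prove the count itself. It quotes MacWilliams' exact formula $\prod_{i=1}^{b}\frac{2^{2i-2}}{2^{2i}-1}\prod_{i=0}^{2b-1}(2^{s-i}-1)$ for $a=2b$ and bounds each product termwise, obtaining $2^{2sb-2b^2}=2^{sa-a^2/2}$ uniformly for all $b\ge 1$, with no case split.

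You instead derive an upper bound from first principles. Your injection $M\mapsto(R,\ \text{induced nondegenerate alternating form on }\F_2^s/R^\perp)$ gives $\binom{s}{a}_2\, 2^{\binom a2}\le 3.47\cdot 2^{sa-a(a+1)/2}$. Here is how the two approaches compare:
\begin{itemize}
\item Your bound is self-contained, and for $a\ge 4$ it is even slightly sharper than the paper's.
\item Two sources of loss make it too weak at $a=2$: the constant $\prod_{j\ge1}(1-2^{-j})^{-1}$, and the fact that you count all $2^{\binom a2}$ alternating forms rather than only the nondegenerate ones.
\item That loss is why you need a separate exact count at $a=2$, and you handle that case correctly.
\end{itemize}

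Your parametrization is in fact a bijection onto pairs (subspace, nondegenerate alternating form on the quotient). So if you count nondegenerate alternating forms on $\F_2^{2b}$ exactly, namely $2^{b^2-b}\prod_{i=1}^{b}(2^{2i-1}-1)$, you recover MacWilliams' formula. In that sense your argument is the structural reason behind the formula the paper cites.

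Your remark about $a=0$ is also correct. The inequality as stated fails there, since the empty graph alone gives a count of $1>2^{-2}$. The paper's own derivation likewise needs $b\ge 1$. This is harmless for the paper, because the sum in the proof of \cref{lem:close-to-uniform} runs only over $a\ge 1$.
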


\begin{remark}
In \cite[Theorem 3]{Mac69}, it was shown that if $a$ is odd, then the number of graphs $F$ with $\rank(F) = a$ is $0$. If $a=2b$ is even, then the number of graphs $F$ with $\rank(F) = a$ is exactly
        \[\prod_{i=1}^{b}\frac{2^{2i-2}}{2^{2i}-1}
\prod_{i=0}^{2b-1}(2^{s-i}-1).\]
Note that we have $\prod_{i=1}^{b}2^{2i-2}=2^{b^2-b}$, $\prod_{i=1}^{b}(2^{2i}-1)\geq \prod_{i=1}^{b}2^{2i-1}=2^{b^2}$, and 
\[\prod_{i=0}^{2b-1}(2^{s-i}-1)\leq \prod_{i=0}^{2b-1}2^{s-i}=2^{2sb-2b^2+b}.\] 
Thus, we have
        \[\prod_{i=1}^{b}\frac{2^{2i-2}}{2^{2i}-1}
\prod_{i=0}^{2b-1}(2^{s-i}-1)\leq 2^{2sb-2b^2}\leq 2^{sa-2},\]
which is the upper bound stated in \cref{lemma:rank_bound}.
\end{remark}

\section{Proof of \cref{thm:main}}\label{sec:proof}

Recall that for two distributions $\mu_1, \mu_2$, we use $\dtv(\mu_1,\mu_2)$ to denote the total variation distance between $\mu_1,\mu_2$ defined as $\dtv(\mu_1,\mu_2) = \frac{1}{2}||\mu_1 - \mu_2||_1$. We use $G(U,p)$ to denote the Erd\H{o}s-Reny\'i random graph on the vertex set $U$ where each edge appears independently with probability $p$. We begin with the following key lemma that shows roughly the following: for $G\sim G(n,p)$ with vertex partition $V(G) = U\sqcup W$ where $W = \{w_1,\dots, w_r\}$. Let $G' = G * w_1 *\dots * w_r$, then the distribution on $G'[U]$ and $G(U,1/2)$ are close in total variation distance. We remark that the statement holds for any fixed subset $W$ without any additional randomness and every ordering of $W$.

\begin{lemma}\label{lem:close-to-uniform}
    For $p\in [0,1]$, $q\eqdef\min(p,1-p)$, and $G\sim G(n,p)$. For any vertex partition $V(G)=U\sqcup W$ with $\abs{U}=s$ and $\abs{W}=r$ satisfying $s\leq q^2r/6$. For any ordering on the vertices of $W=\{w_1,\dots,w_r\}$, let $G'=G* w_1*\dots* w_r$. Then the total variation distance between $G'[U]$ and $G(U,1/2)$ is at most $2^{-q^2r/6}$.
\end{lemma}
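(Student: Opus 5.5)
Our plan is to use Fourier analysis over $\F_2^{\binom{U}{2}}$. Write the edge indicators of $G'[U]$ as a vector $Y\in\F_2^{\binom{U}{2}}$. For a uniform $Z$, the standard $L^1$–$L^2$ bound gives
\[
\dtv(Y,Z)\le \tfrac12\Big(\sum_{\emptyset\ne S\subseteq \binom{U}{2}} \big(\EE[(-1)^{\sum_{e\in S}Y_e}]\big)^2\Big)^{1/2}.
\]
We identify a nonempty $S$ with a nonempty graph $F$ on $U$. It therefore suffices to bound each Fourier coefficient $\abs{\EE[(-1)^{\langle F,G'[U]\rangle}]}$ in terms of $\rank(F)$. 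Then we sum using \cref{lemma:rank_bound}. Since $\rank(F)\ge 2$ for nonempty $F$, the ranks are even and range over $a=2,4,\dots,s$.

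The key structural step is to express $G'[U]$ as a polynomial in the edges of $G$ and to reveal only part of the randomness. We will condition on $G[W]$ and on $G[U]$, and keep random only the bipartite edges $X=G[U,W]$, which are independent $\Ber(p)$. The claim to verify is as follows. Local complementation at $w_i$ changes only edges inside $N(w_i)$. Let $N_i$ denote the neighborhood of $w_i$ at the time it is complemented. Then
\[
G'[U]=G[U]+\sum_i \1_{N_i\cap U}\1_{N_i\cap U}^T \pmod{\mathrm{diag}}.
\]
Moreover, the vector $\1_{N_i\cap U}$ equals $X_{w_i}+\sum_{j<i} c_{ij}X_{w_j}$, where $X_{w}\in\F_2^U$ is the column of $w$ and the coefficients $c_{ij}$ depend only on $G[W]$ and the earlier steps restricted to $W$. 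We expect that $W$-internal updates depend only on $G[W]$: complementing at $w_j$ toggles pairs in $N(w_j)$, and the pairs inside $W$ depend only on $W$-neighborhoods, which evolve within $G[W]$. The system is therefore triangular, $v_i=X_{w_i}+\sum_{j<i}c_{ij}X_{w_j}$, and the change of variables $X\mapsto (v_1,\dots,v_r)$ is an invertible linear map over $\F_2$. Under it the $v_i$ are no longer independent Bernoulli. To avoid this, we instead write
\[
\langle F,G'[U]\rangle=\text{const}+\sum_i v_i^T F_{\text{up}} v_i,
\]
which is a degree-$2$ polynomial in $X$. In this form $\sum_i v_i^T\!F_{\text{up}}v_i$ equals $\sum_{u<u'} F_{uu'}v_{i,u}v_{i,u'}$.

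Next we compute $\rank(\Coef_2)$ of this polynomial in the original variables $X$ and apply \cref{lem:1}. In the $v$-coordinates, the symmetrized coefficient matrix is block diagonal with $r$ copies of $F$ (the adjacency matrix, where $F_{\text{up}}+F_{\text{up}}^T=F$), so it has rank $r\cdot\rank(F)$. An invertible change of variables $T$ transforms the symmetric bilinear form as $T^T(\cdot)T$, which preserves rank. One subtlety must be checked: multilinearization may create diagonal or linear leftovers, but those only alter degree $\le1$ terms, not the alternating part. Thus $\rank\Coef_2 = r\,\rank(F)$, and \cref{lem:1} gives
\[
\abs{\EE[(-1)^{\langle F,G'[U]\rangle}]}\le (1-2q^2)^{\lfloor r\,\rank(F)/6\rfloor}\le 2^{-2q^2(r\rank(F)/6-1)\cdot\log_2 e}.
\]
The conditioned bound also holds after averaging.

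Finally we sum. The number of $F$ with $\rank(F)=a$ is at most $2^{sa-2}$, so
\[
\sum_{F\ne\emptyset}\abs{\widehat{\cdot}}^2\le\sum_{a\ge2}2^{sa-2}\,2^{-c q^2 r a/3}.
\]
With $s\le q^2r/6$, each term is at most about $2^{-a q^2 r/6}$, giving total $\lesssim 2^{-q^2r/3}$ and hence $\dtv\le 2^{-q^2r/6}$. Matching the constants exactly (the floor and $\log_2 e$) is routine bookkeeping.

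The main obstacle is the structural step. It requires verifying carefully that the neighborhoods $N_i\cap U$ are obtained from the columns of $G[U,W]$ by a unitriangular linear map whose coefficients are determined by $G[W]$ alone. It also requires confirming that the resulting quadratic form's alternating part has rank exactly $r\,\rank(F)$.
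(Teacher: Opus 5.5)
Your proposal is correct and follows essentially the same route as the paper. You condition on $G[W]$, express the $U$-parts of the successive neighbourhoods as a unitriangular transform of the columns of $X=G[U,W]$ (the paper's $M=L^{-1}(L^{-1})^T$), show that $\Coef_2$ of the resulting quadratic is congruent to $I_r\otimes A_F$ (the paper writes it directly as $A_F\otimes M$) and so has rank $r\cdot\rank(F)$, and finish with \cref{lem:1} and the MacWilliams count. The differences are cosmetic: you condition on $G[U]$ instead of using the symmetric-difference reduction, and you use the $L^2$ bound $\dtv\le\frac12\bigl(\sum_{F\neq\varnothing}\widehat{\mu}(F)^2\bigr)^{1/2}$ in place of the paper's $\frac12\sum_{F\neq\varnothing}|\widehat{\mu}(F)|$, which squares the coefficients and leaves more slack in the final sum.
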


\begin{proof}
    Let $G_{\Delta}\eqdef G'[U]\triangle G[U]$. 
    Note that since $G_{\Delta}$ only depends on $G[U,W]$ and $G[W]$, it is independent from $G[U]$. Moreover, note that the symmetric difference between $G[U]\sim G(U,p)$ and an independent copy of $G(U,1/2)$ has the same distribution as $G(U,1/2)$. Indeed, this is because the indicator of each edge $e$ in the symmetric difference is of the form $X_e+Y_e$ where $X_e\sim \Ber(p)$ and $Y_e\sim \Ber(1/2)$ are independent, so $X_e+Y_e\sim \Ber(1/2)$. Thus, it suffices to show that the total variation distance between $G_{\Delta}$ and $G(U,1/2)$ is at most $2^{-q^2r/6}$. 
    
    Label the vertices in $U$ as $\{u_1,\dots,u_s\}=U$ and let $X \in\FF_2^{s\times r}$ be the $s\times r$ matrix with entries $X_{ij}$ being the indicator variable of $u_iw_j\in E(G)$ for all $i\in [s]$ and $j\in [r]$. Note that by definition, $X$ has independent $\Ber(p)$ entries corresponding to the biadjacency matrix of $G[U,W]$. For a square matrix $M$, we use $\OffDiag(M)\eqdef M-\textup{Diag}(M)$ to denote the matrix obtained by setting all the diagonal entries of $M$ to zero.
    \begin{claim}\label{claim:M}
        There is an invertible symmetric $r\times r$ matrix $M\in\FF_2^{r\times r}$ that only depends on $G[W]$ such that $\OffDiag(XMX^T)$ is the adjacency matrix of $G_{\Delta}$.
    \end{claim}
    \begin{proof}
        For each $i \in [r]$, let $G_i\eqdef G*w_1*\dots*w_i$. Let $z_i\in\FF_2^s$ be the indicator vector of the edges in $G_i[U,w_i]$ and let $X_i$ be the $i$-th column vector of $X$. Note that $z_i=X_i+\sum_j z_j$, where the summation is over all $j<i$ such that $w_iw_j$ is an edge in $G_{j-1}$. This is because for each $j<i$, local complementation at $w_j$ on $G_{j-1}$ flips the edge between $w_iu$ for $u\in U$ if and only if $w_iw_j$ and $w_ju$ are both edges in $G_{j-1}$. Consider the matrix $L\in\FF_2^{r\times r}$ defined as 
        \[L_{ji}=\begin{cases}
            1&\text{ if }j=i,\\
            1&\text{ if }j<i\text{ and }w_iw_j\text{ is an edge in }G_{j-1},\\
            0&\text{ otherwise}.
        \end{cases}\]
        Furthermore, note that $L$ is upper triangular with all diagonal entries being $1$, so $L$ is invertible. 
        Then $XL^{-1}$ is the matrix with columns exactly $(z_i)_{i=1}^r$. Now, we claim that $M=L^{-1}(L^{-1})^T$ satisfies the conditions in the claim. First, it is clear that $L$ only depends on $G_{j-1}[W]$, which only depends on $G[W]$. Thus, $M$ only depends on $G[W]$.

        Note that from the definition of local complementation, the edge $u_ju_k$ is flipped during local complementation at $w_i$ on $G_{i-1}$ if and only if both $u_jw_i$ and $u_kw_i$ are edges in $G_{i-1}$. Thus, the adjacency matrices of $G_{i-1}[U]$ and $G_i[U]$ differ by exactly $\OffDiag(z_iz_i^T)$. It follows directly that the adjacency matrix of $G_{\Delta}$ is exactly $\OffDiag(\sum_{i=1}^rz_iz_i^T)=\OffDiag(XMX^T)$. This completes the proof.
    \end{proof}
    
    Next, we analyze the distribution of $G_{\Delta}$ using discrete Fourier transformation. To simplify the analysis, we will always condition on $G[W]=G^*$ from now on. We use $\mu:\FF_2^{\binom{U}{2}}\rightarrow [0,1]$ to denote the probability distribution of $G_{\Delta}$ conditioning on $G[W]=G^*$. Namely, for each $H\subseteq \binom{U}{2}$, we define
    \[\mu(H)\eqdef\PP(G_{\Delta}=H\mid G[W]=G^*).\]
    In addition, let $\mu_{\textup{unif}}$ be the uniform distribution over $\binom{U}{2}$, i.e. $\mu_{\textup{unif}}=2^{-\binom{s}{2}}$ where $s = |U|$ for every $H\subseteq \binom{U}{2}$.
    
    For a function $f:\FF_2^{\binom{U}{2}}\rightarrow \RR$, we define its Fourier transformation $\widehat{f}:\FF_2^{\binom{U}{2}}\rightarrow \RR$ by
    \[\widehat{f}(F)\eqdef\sum_{H\subseteq \binom{U}{2}}f(H)(-1)^{\abs{H\cap F}}.\] Here we abuse notation and use $H$ and $F$ to denote the graphs on the vertex set $U$ and their corresponding edge indicator vector in $\F_2^{\binom{U}{2}}$. We first recall the following standard fact of bounding the total variation distance between $\mu$ and $\mu_{\textup{unif}}$ via the Fourier coefficients of $\mu$.
    
    \begin{claim}\label{claim:Fourier}
        Let $\mu$ be a probability distribution over $\binom{U}{2}$ and $\mu_{\textup{unif}}$ be the uniform distribution over $\binom{U}{2}$. Then we have
        \[d_{\textup{TV}}(\mu,\mu_{\textup{unif}})\le \frac{1}{2}\sum_{F\ne\varnothing\subseteq \binom{U}{2}}|\widehat{\mu}(F)|.\]
    \end{claim}
    \begin{proof}
        Note that we have 
        \[\widehat{\mu_{\textup{unif}}}(F)=\begin{cases}
            1&\text{ if }F=\varnothing,\\
            0&\text{ otherwise},
        \end{cases}\]
        and $\widehat{\mu}(\varnothing)=1$. Thus, by linearity of the Fourier transform, we have
        \[\widehat{(\mu-\mu_{\textup{unif}})}(F) = \widehat{\mu}(F) - \widehat{\mu_{\textup{unif}}}(F)=\begin{cases}
            0&\text{ if }F=\varnothing,\\
            \widehat{\mu}(F)&\text{ otherwise}.
        \end{cases}\]
        Thus, recall that $d_{\textup{TV}}(\mu,\mu_{\textup{unif}})\eqdef\frac{1}{2}||\mu-\mu_{\textup{unif}}||_1$, the claim follows from the fact that $||f||_1\leq \sum_{F\subseteq\binom{U}{2}}\abs{\widehat{f}(F)}$. For completeness, we include a proof of this standard fact here. By the Fourier inversion formula, we have $f(H)=2^{-\binom{s}{2}}\sum_{F\subseteq\binom{U}{2}}\widehat{f}(F)(-1)^{\abs{H\cap F}}$, and hence
        \[\abs{f(H)}\leq 2^{-\binom{s}{2}}\sum_{F\subseteq\binom{U}{2}}\abs{\widehat{f}(F)}.\]
        Therefore, 
        \[||f||_1=\sum_{H\subseteq\binom{U}{2}}\abs{f(H)}\leq 2^{-\binom{s}{2}}\sum_{H\subseteq\binom{U}{2}}\sum_{F\subseteq\binom{U}{2}}\abs{\widehat{f}(F)}=\sum_{F\subseteq\binom{U}{2}}\abs{\widehat{f}(F)}.\qedhere\]
    \end{proof}
    
    Now we upper bound $\abs{\widehat{\mu}(F)}$ for each $F\subseteq \binom{U}{2}$ by the rank of its adjacency matrix. From now on, we abuse notation and use $\rank(F)$ to denote the rank of the adjacency matrix of $F$ over $\F_2$.
    
    \begin{claim}\label{claim:Fourier_bound}
        For any $F\subseteq \binom{U}{2}$, we have 
        \[\abs{\widehat{\mu}(F)}\leq (1-2q^2)^{r\cdot\rank(F)/6-1}.\]
    \end{claim}
    
    \begin{proof}
        Recall that by definition, we have
        \[\widehat{\mu}(F)=\sum_{H\subseteq\binom{U}{2}}\PP(G_{\Delta}=H\mid G[W]=G^*)(-1)^{\abs{H\cap F}}=\EE_{H\sim \mu}[(-1)^{\abs{H\cap F}}],\] 
        where we recall that $\mu$ is the distribution of $(G_{\Delta}\mid G[W]=G^*)$. By $\cref{claim:M}$, we know that there exists an invertible symmetric matrix $M$ satisfying the following: Let $X\in \F_2^{s\times r}$ be the biadjacency matrix of $G[U,W]$ over $\F_2$ with independent $\Ber(p)$ entries. Let $v_i\eqdef(X_{i1},\dots,X_{ir})$ be the $i$-row of $X$. Then for any $\{i,j\}\in \binom{[s]}{2}$, we have $u_iu_j\in H$ if and only if $v_iMv_j^T=1$.
        
        Define for convenience $F_0\eqdef\{\{i,j\}\in\binom{[s]}{2}\mid u_iu_j\in F\}$ so that $F_0$ is the graph $F$ relabeled on the vertex set $[s]$ via the ordering $u_i\mapsto i$ on $U$. It follows that $\abs{H\cap F}\equiv \sum_{\{i,j\}\in F_0}v_iMv_j^T\pmod{2}$. Therefore, we can rewrite
        \[\widehat{\mu}(F)=\EE_X\left[(-1)^{\sum_{ij\in F_0}v_iMv_j^T}\right].\]

        Let $f\in \FF_2[X_{11},\dots,X_{sr}]$ be the degree-$2$ polynomial given by $f(X)=\sum_{\{i,j\}\in F_0}v_iMv_j^T$. Note that $f$ is multilinear since there are no self-loops in $F_0$. Furthermore, for any $i,k\in [s]$ and $j,\ell\in [r]$, the term $X_{ij}X_{k\ell}$ appears in $f$ if and only if $\{i,k\}\in F_0$ and $M_{j\ell} = 1$. Thus, we know that $\Coef_2(f)=A_{F_0}\otimes M$, where $A_{F_0}$ is the adjacency matrix of $F_0$. Namely, the coefficient of the term $X_{ij}X_{k\ell}$ in $f$ is $(A_{F_0})_{ik}M_{j\ell}$. Therefore, we have $\rank(\Coef_2(f))=\rank(M)\cdot \rank(F_0) = r\cdot\rank(F)$ since $F_0$ is just $F$ on the vertex set $[s]$. By \cref{lem:1}, we can conclude that
        \[\abs{\widehat{\mu}(F)}=\abs{\EE_X\left[(-1)^{f(X)}\right]}\leq (1-2q^2)^{\lfloor r\cdot\rank(F)/6\rfloor}\leq (1-2q^2)^{r\cdot\rank(F)/6-1}.\qedhere\]
    \end{proof}

    Now for any $a\ge 0$, we have by \cref{thm:main} the number of graphs $F\subseteq \binom{U}{2}$ with $\rank(F) = a$ is at most $2^{sa-2}$. Combined with \cref{claim:Fourier,claim:Fourier_bound}, we obtain
    \begin{align*}
        d_{\textup{TV}}(\mu,\mu_{\textup{unif}})\leq &\frac{1}{2}\sum_{F\ne\varnothing\subseteq \binom{U}{2}}|\widehat{\mu}(F)|
        \leq \frac{1}{2}\sum_{F\ne\varnothing\subseteq \binom{U}{2}}(1-2q^2)^{r\cdot\rank(F)/6-1}
        \leq \frac{1}{2}\sum_{a=1}^\infty 2^{sa-2}(1-2q^2)^{ra/6-1}\\
        \leq &\frac{1}{2}\sum_{a=1}^\infty 2^{sa}2^{-2q^2ra/6},
    \end{align*}
    where the last inequality follows the fact that $1-2q^2\geq 1/4$ and for all $x\geq 0$ we have $1-x\leq 2^{-x}$. Note that the common ratio in the geometric series is $2^{s-2q^2r/6}\le 2^{-s}\le 1/2$ since $s-2q^2r/6\leq -s\leq -1$. Therefore, we have
    \[\frac{1}{2}\sum_{a=1}^\infty 2^{sa-2q^2ra/6}\leq 2^{s-2q^2r/6}\leq 2^{-q^2r/6}.\]
    By an averaging argument over all possible assignments $G^*$ to $G[W]$, we can conclude that
    \[d_{\textup{TV}}(G_{\Delta},G(U,1/2))\leq 2^{-q^2r/6}\]
    as desired.
\end{proof}

Now, we can prove \cref{thm:main}.
\begin{proof}[Proof of \cref{thm:main}]
    It suffices to prove the statement for $k=\lfloor q\sqrt{n}/100\rfloor$ since $k$-vertex-minor universality is monotone decreasing in $k$. Let $G\sim G(n,p)$. For any fixed $S\subseteq [n]$ with size $k$. We partition the vertices $V(G)=U\sqcup W$ with $S\subseteq U$, $\abs{U}=\lfloor q^2n/12\rfloor$, and $W=\{w_1,\dots,w_r\}$ with $r=n-\lfloor q^2n/12\rfloor$. When $n$ is sufficiently large, we have $r\geq n/2$. Therefore, we have $q^2r/6\geq q^2n/12\geq \lfloor q^2n/12\rfloor=s$. Let $G'=G* w_1*\dots* w_r$. By \cref{lem:close-to-uniform}, we have the total variation distance between $G'[U]$ and $G(U,1/2)$ is at most $2^{-q^2n/12}$. Therefore, there is a coupling such that 
    \[\PP(G'[U]\neq G(U,1/2))\leq 2^{-q^2n/12}.\]
    Applying \cref{thm:1/2} to $U$ with $\abs{U}=\lfloor q^2n/12\rfloor$ and pick $C=2\log_2(4/3)\lfloor q^2n/12\rfloor/k^2-1$. When $n$ is sufficiently large, we know that $C> q^2n/15k^2$, and hence we have
    \[\PP(G(U,1/2)\text{ is not vertex-minor universal})\le 2^{-(1+o(1))Ck^2/2}\leq 2^{-q^2n/40}.\]
    Since we have
    \[\PP(G'[U]\text{ is not vertex-minor universal})\le \PP(G'[U]\neq G(U,1/2)) + \PP(G(U,1/2)\text{ is not vertex-minor universal}),\]
    we can conclude that for $n$ sufficiently large, we have
    \[\PP(G'[U]\text{ is not vertex-minor universal})\leq 2^{-q^2n/12}+2^{-q^2n/40}\le 2^{1-q^2n/40}.\]
    
    For each $S\subseteq [n]$ with size $k$, we fix a partition $V(G)=U\sqcup W$ as above and we have the probability that there is a graph on vertex set $S$ that is not a vertex-minor of $G$ is at most $2^{1-q^2n/40}$. Now, we take a union bound over all possible $S$. Since there are $\binom{n}{k}\leq n^k$ many subsets $S$ and by assumption we have $k=\lfloor q\sqrt{n}/100\rfloor$, the number of subsets $S$ is at most $n^{\lfloor q\sqrt{n}/100\rfloor}\leq 2^{q\sqrt{n}\log n}\leq 2^{q^2n/100}$. Therefore, by the union bound, $G$ is not $k$-vertex-minor universal with probability at most $2^{q^2n/100}\cdot 2^{1-q^2n/40}\leq 2^{-q^2n/100}$ for $n$ sufficiently large. This completes the proof.
\end{proof}

\section{Concluding remarks}\label{sec:conclude}

In \cite{AFFMP26}, the authors also considered the universality question for pivot-minors and matroid-minors and proved analogous results to \cref{thm:1/2} for $G(n,1/2)$ in these settings. We believe our methods should be able to generalize those results to the setting of $G(n,p)$. We briefly outline how to adapt our strategy to those settings and discuss the extra obstruction in this section.

Since minors in binary matroids can be represented by pivot-minors of their fundamental graphs in the ordered bipartite setting, let us follow the setup in \cite[Section 4]{AFFMP26} and work with ordered bipartite random graph from now on. See \cite{AFFMP26} for more details on the reduction from binary matroids to pivot minors. For a simple graph $G$ and an edge $uv\in E(G)$, a pivot at $uv$, denoted by $G\times uv$, is defined to be the operation $G*u*v*u$. One can check that $G *u*v*u = G*v*u*v$, so the pivot operation does not depend on the ordering of $uv$ and thus is well-defined. Equivalently, the graph $G\times uv$ is obtained by taking the symmetric difference between $G$ with the complete tripartite graph with parts $N(v)\cap N(u)$, $N(v)\setminus (N(u)\setminus \{u\})$, and $N(u)\setminus (N(v)\setminus \{v\})$, and then swapping the labels of $u$ and $v$. An ordered bipartite graph $G = (L, R, E)$ has bipartition $V(G) = L\sqcup R$ and edges contained in $L\times R$. Note that for $uv\in E(G)$, we have 
\[G\times uv \eqdef (L\triangle \{u,v\}, R \triangle \{u,v\}, E') \]
where $E'$ is the edges of the unordered bipartite graph $(L,R,E)\times uv$. For any ordered bipartite graph $H = (V_L,V_R,E(H))$ with $V_L\cup V_R\subseteq V(G)$, we say that $H$ is a (labeled) pivot-minor of $G$ if it can be obtained on $V_L\cup V_R$ via a sequence of vertex deletions and pivots. We say that ordered bipartite graph $G$ is \emph{$k$-pivot-minor universal} if any ordered bipartite graph $H$ on any $k$ vertices of $V(G)$ is a pivot minor of $G$.

Let $G(a, b,p)$ denote the random ordered bipartite graph on vertex bipartition $L\sqcup R$ with $|L| = a$, $|R| = b$, and edges between $L\times R$ present independently with probability $p$. We believe \cref{lem:close-to-uniform} can be adapted in this setting with the following strategy. 

We fix disjoint subsets $U_L\sqcup W_L\subseteq L$ and $U_R\sqcup W_R\subseteq R$ such that $|U_L| = |U_R|=s$ and $|W_L| =|W_R| = r$. 
Suppose that we can find disjoint pairs $w_1w'_1,\dots,w_{r'}w'_{r'}\in W_L\times W_R$ such that $w_{i+1}w'_{i+1}$ is an edge in $G_{i-1}=G\times w_1w'_1\times\dots\times w_{i}w'_{i}$ for all $i=0,1,\dots,r'-1$. Then we set $G'=G_{r'}$ to be the resulting graph, and let $W'_L=\{w_1,\dots,w_{r'}\}, W'_R=\{w'_1,\dots,w'_{r'}\}$.

Define $G_\Delta = G'[U_L,U_R]\triangle G[U_L,U_R]$. Let $G_i = G\times w_1w'_1\times \dots \times w_iw'_i$ and $U_L = \{u_1,\dots, u_{s_1}\}$, $U_R = \{u'_1,\dots, u'_{s_2}\}$. Then we can define $X_L\in \F_2^{|U_L|\times |W'_R|}$ with entries $(X_L)_{ij}$ being the indicator variable of the edge $u_iw'_j$. Similarly, define $X_R\in \F_2^{|U_R|\times |W'_L|}$ with entries $(X_R)_{ij}$ being the indicator variable of the edge $u'_iw_j$. Then similar to \cref{claim:M}, we claim that there is an invertible (not necessarily symmetric) matrix $M\in \F_2^{r'\times r'}$ only depending on $G[W_L,W_R]$ such that $\OffDiag(X_L M X_R^T)$ is the adjacency matrix of $G_\Delta$.

Indeed, we can define $z_i^L\in \F_2^{U_L}$ to be the indicator vector of the edges in $G_i[U_L,w'_i]$ and define $z_i^R\in \F_2^{U_R}$ to be the indicator vector of the edges in $G_i[U_R,w_i]$. The $i$-th pivot changes adjacency matrix of $G_{i-1}[U_L,U_R]$ by exactly $z_i^L(z_i^R)^T$. We can find upper triangular matrices $Q_L$ and $Q_R$ such that $X_LQ_L$ is the matrix with columns $(z_i^L)_{i = 1}^{r'}$ and $X_RQ_R$ is the matrix with columns $(z_i^R)_{i = 1}^{r'}$. Taking $M = Q_LQ_R^T$ satisfies the desired conditions. The rest of the argument should follow through, and we can upper bound the total variation distance between $G'[U_L,U_R]$ and $G'(s,s,1/2)$ by $2^{-\Omega(q^2r')}$ where $q=\min(p,1-p)$.

Thus, in order to prove an analogous result for pivot-minor universality, it is sufficient to answer the following question.
\begin{question}
    Let $G\sim G(r,r,p)$ and $\min(p,1-p)=\Omega(\log n/\sqrt{n})$. Can we find disjoint pairs of vertices $w_1w'_1,\dots,w_{r'}w'_{r'}$ such that $w_{i+1}w'_{i+1}$ is an edge in $G\times w_1w'_1\times\dots\times w_{i}w'_{i}$ for all $i=0,1,\dots,r'-1$ and $r'=\Theta(r)$ with high probability?
\end{question}
We believe that the answer to this question is affirmative since $p$ is much larger than the threshold of having a perfect matching.

After we put the first version of this note on arXiv, the following solution to the question is pointed out to us by Mathieu Rundstr\"om. The answer is to this question is affirmative, and it is true that $r'\geq r/2$ with probability at least $1-2^{-\Omega(qr)}$, where $q=\min(p,1-p)$. This can be proved with the following observation. Let $A$ be the biadjacency matrix of $G$, then the maximum possible $r'$ is at least $\gamma\eqdef\rank(A)$. Here, we recall that $\rank(A)$ denotes the rank over $\FF_2$. To see this observation, we can restrict $A$ to an invertible $\gamma\times\gamma$ submatrix $B$. Let $V_B$ be the vertex set corresponding to the rows and columns of $B$. It is sufficient to pick $w_1w'_1,\dots,w_{\gamma}w'_{\gamma}$ from $V_B$.

We can show this by inducting on $\gamma$.
We first pick an arbitrary entry with value one, say $B_{11}=1$, and set $w_1w'_1$ to be the edge correspond to this entry. Suppose $B=\begin{pmatrix}
    1&\alpha\\
    \beta &B'
\end{pmatrix}$, where $\alpha\in \FF_2^{1\times (\gamma-1)}$, $\beta\in \FF_2^{(\gamma-1)\times 1}$, and $B'\in \FF_2^{(\gamma-1)\times (\gamma-1)}$. It follows that the biadjacency matrix of $(G\times w_1w'_1)[V_B]$ equals
$\begin{pmatrix}
    1&0\\
    0 &B'+\alpha\beta
\end{pmatrix}$. Note that $B'+\alpha\beta$ is invertible since 
\[\begin{pmatrix}
    1&0\\
    0 &B'+\alpha\beta
\end{pmatrix}=\begin{pmatrix}
    1&0\\
    \beta &I_{\gamma-1}
\end{pmatrix}\begin{pmatrix}
    1&\alpha\\
    \beta &B'
\end{pmatrix}\begin{pmatrix}
    1&\alpha\\
    0 &I_{\gamma-1}
\end{pmatrix}\]
is invertible. Thus, we can find $w_2w'_2,\dots,w_\gamma w'_\gamma$ by applying the inductive hypothesis to $B'+\alpha\beta$.

Now, it is sufficient to show that with high probability $\rank(A)\geq r/2$. Note that $A\in\FF_2^{r\times r}$ is a random binary matrix with i.i.d. $\Ber(p)$ entries. Let $A_i$ be the $i$-th column vector of $A$. We first bound the probability $\PP[A_{i+1}\in\Span(A_1,\dots,A_{i})\mid A_1,\dots,A_{i}]$. Let $\cV_{i}=\Span(A_1,\dots,A_{i})$ and suppose the dimension of $\cV_{i}$ is $d_{i}$. Given $A_1,\dots,A_{i}$, we can find a set $I_i$ of $d_{i}$ entries such that the projection $\pi_{I_i}(\cV_{i})$ onto $I_i$ is surjective. Hence, for any $v\in \cV_{i}$, the projection $\pi_{[r]\setminus I_i}(v)$ is uniquely determined by $\pi_{I_i}(v)$. Indeed, we know that there is a linear map $T$, depending on $\cV_i$, such that $\pi_{[r]\setminus I_i}(v)=T(\pi_{I_i}(v))$. Thus, $A_{i+1}\in \cV_i$ only if $\pi_{[r]\setminus I_i}(A_{i+1})=T(\pi_{I_i}(A_{i+1}))$, which happens with probability at most $(1-q)^{r-d_i}$ since we can first expose $\pi_{I_i}(A_{i+1})$. Hence, we can conclude that
\[\PP[A_{i+1}\in \cV_i\mid A_1,\dots,A_{i}]\leq (1-q)^{r-d_i}.\]
Fix $0<\gamma_0<r$. Let $\mathbbm{1}_{i}$ be the indicator function of the event that $A_{i}\in\cV_{i-1}$ and $\dim\cV_{i-1}\leq \gamma_0$ for $i=1,\dots,r$, assuming that $\cV_0=\{0\}$. Set $Z=\sum_{i=1}^r \mathbbm{1}_{i}$ to be the number of indices $i$ such that $A_{i}\in\cV_{i-1}$ happens before the rank exceeds $\gamma_0$. We know that $\EE[\mathbbm{1}_{i}]\leq (1-q)^{r-d_i}\leq (1-q)^{r-\gamma_0}$ holds for all $i=1,\dots,r$. Thus, by linearity of expectation we have $\EE [Z]\leq r(1-q)^{r-\gamma_0}$. On the other hand, we know that $\rank(A)\leq \gamma_0$ only if $Z\geq r-\gamma_0$. Thus,
\[\PP[\rank(A)\leq \gamma_0]\leq\PP[Z\geq r-\gamma_0]\leq \frac{\EE[Z]}{r-\gamma_0}\leq \frac{r(1-q)^{r-\gamma_0}}{r-\gamma_0}.\]
If we set $\gamma_0=r/2$, then the probability is at most $2(1-q)^{r/2}=2^{-\Omega(qr)}$. This proves the claim that $r'\geq r/2$ with probability at least $1-2^{-\Omega(qr)}$.

From all the discussion above, we can conclude in a way similar to \cref{sec:proof} that the bipartite graph $G$ is pivot-minor universal with high probability when $\abs{L},\abs{R}$ are not too unbalanced. To be more explicit, suppose $\abs{L}=a, \abs{R}=b$ with $a\geq b$, and $G\sim G(L,R,p)$. We can show that $G$ is $k$-pivot-minor universal with high probability if $k=O(\min(q\sqrt{b},q^2b/\log a))$. This can be done by picking $s=\Theta(q^2b)$ and $r=\abs{R}-s=\Theta(b)$ in the discussion above. We know that for any fixed ordered bipartite graph $H = (V_L,V_R,E(H))$ with $V_L\cup V_R\subseteq V(G)$ and $\abs{V_L\cup V_R}=k$, the probability that $H$ is not a pivot minor of $G$ is $O(2^{-q^2b})$ since $k=O(q\sqrt{b})$. Because there are at most $2^{O(k\log a)}$ many choices for $(V_L,V_R)$, we can conclude by the union bound that the probability that $G$ is not $k$-pivot-minor universal is at most $2^{O(k\log a)-\Omega(q^2b)}=o(1)$.

\printbibliography
\appendix
\section{Proof of \cref{thm:openai}}\label{appendix:openai}
In this appendix, we prove \cref{thm:openai}. Let $p\in (0,1/2]$, we will first prove the case where $G\sim G(n,p)$, and the case $G\sim G(n,1-p)$ will follow similarly.

The key step is to find the structure defined as follows.

\begin{definition}
Let $G$ be a graph and let $S \subseteq V(G)$.  A \emph{clean subdivision $R$ of $S$}
is a subset of vertices in $V(G)\setminus S$ that contains distinct vertices $a_{uv}, b_{uv}\notin S$ for every $\{u,v\} \in \binom{S}{2}$ such that the following holds. For each $\{u,v\} \in \binom{S}{2}$, the vertices $u,a_{uv},b_{uv},v$ form a path of length three, and these are the only edges in $G[S\cup R]\setminus G[S]$.
\end{definition}

It is not hard to show that any graph on $S$ is a vertex minor of $G$ if $S$ admits a clean subdivision.
\begin{lemma}
\label{lem:toggling}
Let $G$ be a graph, $S\subseteq V(G)$, and suppose that $G$ contains a clean subdivision $R$ of $S$. Then for every graph $H$ with $V(H)=S$, we know that $H$ is a vertex minor of $G$.
\end{lemma}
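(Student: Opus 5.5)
We first delete every vertex outside $S\cup R$; since vertex-minors are closed under further vertex deletion and local complementation, it suffices to work in $G_0 \eqdef G[S\cup R]$. In $G_0$, $S$ induces some graph $G[S]$. Each pair $\{u,v\}$ has its own path $u,a_{uv},b_{uv},v$, and the internal vertices $a_{uv},b_{uv}$ have no other neighbours in $G_0$. The plan is to use each gadget path independently to toggle, or not toggle, the edge $uv$, and then delete all of $R$. To go from $G[S]$ to $H$, we toggle exactly the pairs in $G[S]\triangle H$.

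The single toggle step is as follows. Fix $\{u,v\}$ and write $a=a_{uv}$, $b=b_{uv}$, so $N(a)=\{u,b\}$ and $N(b)=\{a,v\}$.
\begin{enumerate}
\item Local complementation at $a$ toggles the edge $ub$, creating it. Now $N(b)=\{a,u,v\}$.
\item Local complementation at $b$ toggles all three pairs among $\{a,u,v\}$: the edge $au$ is removed, the edge $av$ is added, and the edge $uv$ is toggled.
\item Deleting $a$ and $b$ then leaves the graph on the remaining vertices equal to the original one with only $uv$ toggled.
\end{enumerate}
If we do not want to toggle $uv$, we simply delete $a$ and $b$.

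We must also check that the gadgets do not interfere. The local complementation at $a$ only affects pairs inside $N(a)=\{u,b\}$. The one at $b$ only affects pairs inside $\{a,u,v\}$. Neither touches any other gadget vertex or any other pair of $S$, because the gadgets' internal vertices are pairwise non-adjacent and are adjacent only to their own endpoints. Processing the pairs $\{u,v\}\in\binom{S}{2}$ one at a time, and invoking the clean-subdivision property so that each gadget is still intact when we reach it, we therefore obtain exactly $H$ on $S$.

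The only delicate point is this bookkeeping: verifying that after earlier operations, the neighbourhoods of the untouched gadgets' internal vertices are unchanged. This follows because every operation only modifies pairs inside $\{u,v,a_{uv},b_{uv}\}$ for the current gadget, and those vertices are then deleted, apart from $u$ and $v$, whose adjacency to other gadgets' vertices is never among the modified pairs.
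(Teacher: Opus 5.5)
Your proof is correct and follows the paper's argument. You restrict to $G[S\cup R]$, locally complement at $a_{uv}$ and then at $b_{uv}$ to flip exactly $uv$ on $S$, observe that these operations only touch pairs inside $\{u,v,a_{uv},b_{uv}\}$ so the gadgets do not interfere, and delete the gadget vertices. Your explicit check of the toggle step and your deletion of each gadget as you go, rather than all of $R$ at the end, are only cosmetic differences.
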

\begin{proof}
We may first remove all vertices beside $R\cup S$ from $V(G)$.
For each $\{u,v\} \in \binom{S}{2}$, a local
complementation first at $a_{uv}$ and then at $b_{uv}$ flips the edge $\{u,v\}$. Moreover, this operation only change edges between $u,v,a_{uv},b_{uv}$. Thus, we may apply this operation in any order to pairs $\{u,v\} \in G[S]\triangle H[S]$. After all the operations are done, we remove all vertices in $R$, and the remaining graph is exactly $H$.
\end{proof}

Thus, it is sufficient to show that with high probability that every $S\subseteq V(G)$ of size $k$ admits a clean subdivision. To do so, we will first show that, with high probability, one can greedily pick $a_{uv},b_{uv}$ for each $\{u,v\}\in \binom{S}{2}$.

We say that a graph $G$ is \emph{good} if the following holds. For every $\{u,v\}\subseteq S \subseteq F \subseteq [n]$ with $\abs{S}=k$ and $|F| \le k^2$, there always exist two vertices $a_{uv},b_{uv}\notin F$ such that $u,a_{uv},b_{uv},v$ form a path of length three, and these are the only edges between $\{a_{uv},b_{uv}\}$ and $F$.

\begin{lemma}
\label{lem:sparse-reservoir}
Assume that $n$ is sufficiently large, $1/\sqrt{n}\leq p\leq n^{-1/3}/2$, and $G\sim G(n,p)$.
Then $G$ is good with probability at least $1-e^{-pn/200}$.
\end{lemma}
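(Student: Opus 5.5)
We prove \cref{lem:sparse-reservoir} with a first-moment argument. We show that for a fixed choice of $u,v,S,F$ the required pair $a_{uv},b_{uv}$ exists except with probability $e^{-\Omega(pn)}$. A union bound over all choices then fails, however, since the number of sets $F$ of size up to $k^2$ is about $n^{k^2}=e^{pn\cdot p\log n}$, far too many. So instead we quantify over the edges incident to $u$ and $v$ directly, and show that a structural property of $G$ that does not mention $F$ already implies goodness.

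Fix $u\ne v$, and let $Q=[n]\setminus\{u,v\}$. We call an ordered pair $(a,b)$ of distinct vertices of $Q$ a \emph{candidate} if $ua,ab,bv$ are edges, $va$ and $ub$ are non-edges, and $a,b$ have no other neighbors among the pair. For each vertex $a\in N(u)\setminus N(v)$, the number of $b\in N(a)\cap N(v)\setminus N(u)$ is roughly $p^2n$. So there are about $p\cdot n\cdot p^2 n=p^3n^2$ candidates, and since $p\ge 1/\sqrt n$ this is at least $\sqrt n$.

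Given $F\supseteq\{u,v\}$ with $|F|\le k^2\le p^2n$, a candidate $(a,b)$ is destroyed in one of two ways. The first is that $a$ or $b$ lies in $F$. The second is that $a$ or $b$ has a neighbor in $F\setminus\{u,v\}$. To exploit the disjointness from $F$, we consider $N(u)\cap N(a)$-type structure. Each vertex $a\in N(u)$ is \emph{spoiled} by $F$ if it lies in $F$ or is adjacent to $F\setminus\{u,v\}$. We then need to bound the number of spoiled vertices in $N(u)$ uniformly over $F$. The main claim is that, with probability $1-e^{-pn/100}$, for every $u$ and every set $F$ with $|F|\le p^2 n$, at most $|N(u)|/4$ vertices of $N(u)$ are adjacent to $F\setminus\{u\}$, and similarly for $v$.

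We prove this claim via the bipartite edge count $e(N(u),F)$. Since $|N(u)|\approx pn$ and $|F|\le p^2n$, the expected number of edges is $p\cdot pn\cdot p^2n=p^4n^2$, which compared with $pn$ is $p^3n\le 1/8$. Hence we need a uniform bound $e(A,B)\le |A|/8$ for sets $A$ of size about $pn$ and $B$ of size at most $p^2n$. Chernoff with a union bound over $\binom{n}{p^2n}\le e^{p^2n\log n}$ choices of $B$ does not suffice directly. Instead, we first expose the neighborhood $N(u)$ and condition on it, then use a Janson/Chernoff bound at the scale $pn$, with deviation factor $1/(8p^3n)$. This gives tail probability $\exp(-\Omega(pn\log(1/(p^3n))))$, which should beat $e^{p^2n\log n}$ because $p\le n^{-1/3}/2$. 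I expect this step to be the main obstacle: the large-deviation regime has to be handled carefully, possibly by bounding the number of vertices with two or more neighbors in $B$ separately.

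Given the claim, we argue as follows. Among $N(u)\setminus N(v)$, at least half of the vertices $a$ are unspoiled, and for each such $a$ the set $N(a)\cap N(v)$ contains at least one unspoiled vertex $b$ that avoids $N(u)$ and $N(a)$-conflicts. The latter uses the same claim applied to the pair of vertices $a,v$, together with a bound of $O(1)$ on common neighborhoods of three vertices, which holds with high probability because $p^3n\to0$. Finally, we take the union bound over the $O(n^2)$ pairs $u,v$, and the $e^{-pn/100}$ tails give the stated probability $1-e^{-pn/200}$.
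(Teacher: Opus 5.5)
\textbf{There is a genuine gap, and it starts from a miscalculation.} You reject the direct union bound because the number of sets $F$ is about $n^{k^2}=e^{pn\cdot p\log n}$. But $p\le n^{-1/3}/2$ gives $p\log n\to 0$, so this count is $e^{o(pn)}$. A per-triple failure probability of $e^{-cpn}$ therefore beats it easily. You even make this comparison yourself later, when you say a tail bound ``should beat $e^{p^2n\log n}$ because $p\le n^{-1/3}/2$''.

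The paper's proof is exactly this direct union bound:
\begin{itemize}
\item Fix $(\{u,v\},S,F)$. Let $A_u$ (resp.\ $A_v$) be the set of vertices outside $F$ whose only neighbor in $F$ is $u$ (resp.\ $v$).
\item Then $|A_u|\sim\mathrm{Bin}(n-|F|,p(1-p)^{|F|-1})$ has mean $\mu\ge pn/4$, using $(1-p)^{k^2}\ge 1-p^3n\ge 1/2$.
\item Chernoff gives $|A_u|,|A_v|\ge\mu/2$ except with probability $2e^{-\mu/8}$.
\item Edges between $A_u$ and $A_v$ lie inside $V\setminus F$, so they are independent of the edges used to define $A_u,A_v$. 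Hence there is no $A_u$--$A_v$ edge with probability at most $e^{-p\mu^2/4}\le e^{-p^3n^2/64}\le e^{-pn/64}$, since $p^2n\ge1$.
\item Any such edge $ab$ gives the required path $u,a,b,v$.
\item There are at most $n^{2k^2}$ triples, and $2k^2\log n\le 2p^2n\log n\le pn/200$. So the total failure probability is at most $n^{2k^2}e^{-pn/100}\le e^{-pn/200}$.
\end{itemize}

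\textbf{The structural argument you substitute does not work as written.}
\begin{itemize}
\item \emph{The final step is false as stated.} You claim that for each unspoiled $a\in N(u)\setminus N(v)$, the set $N(a)\cap N(v)$ contains an unspoiled $b$. But $|N(a)\cap N(v)|\approx p^2n$ is comparable to the budget $|F|\le k^2\le p^2n$. Whenever this set fits in the budget, $F$ can contain one neighbor of each such $b$, chosen non-adjacent to $a$. This spoils every candidate for that $a$ while leaving $a$ unspoiled.
\item \emph{Your main claim does not give what you need.} It controls only a quarter of the whole neighborhood $N(v)$, which says nothing about the far smaller set $N(a)\cap N(v)$. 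What you would actually need is an edge between the unspoiled parts of $N(u)\setminus N(v)$ and $N(v)\setminus N(u)$, for every $F$ simultaneously. This is never proved.
\item \emph{The main claim fails at the top of the range.} Its threshold $e(A,B)\le|A|/8$ equals the mean $p^4n^2=pn\cdot p^3n$ when $p=n^{-1/3}/2$. There your ``deviation factor'' $1/(8p^3n)$ is $1$, so the tail bound is vacuous. Also, $p^3n\to0$ fails at that end of the range.
\item \emph{The triple-codegree bound is wrong or too weak.} An $O(1)$ bound on common neighborhoods of three vertices is false when $p^3n=\Theta(1)$; the maximum is then of order $\log n/\log\log n$. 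Even when $p^3n\to0$, it fails with probability only polynomially small in $n$, not $e^{-\Omega(pn)}$. So the claimed bound $1-e^{-pn/200}$ would not follow.
\end{itemize}
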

\begin{proof}
    We will union bound over all possible triples $(\{u,v\},S,F)$ where $\{u,v\}\subseteq S \subseteq F \subseteq [n]$, $\abs{S}=k$, and $|F| \le k^2$. We first fix such a triple $(\{u,v\},S,F)$ and let $r=\abs{F}$ for convenience.
    Let $A_u$ be the set of vertices $x\in V(G)\setminus F$ such that $u$ is the only neighbor of $x$ in $F$, and we define $A_v$ similarly.
    For each $x\in V(G)\setminus F$, we know that $x\in A_u$ with probability $\theta=p(1-p)^{r-1}$. Therefore, $\abs{A_u}\sim\textup{Bin}(n-r,\theta)$. We know that the expectation of $\abs{A_u}$ is $\mu\eqdef(n-r)p(1-p)^{r-1}$. Since $r\leq k^2$, we know that $(1-p)^{r-1}\geq 1-k^2p\geq 1-p^3n\geq 1/2$ by Bernoulli's inequality and our assumption on $p$ and $k$. We also have $n-r\geq n-k^2\geq n/2$. Thus, we get $\mu\geq pn/4$. By the multiplicative form of Chernoff's inequality, we have
\[
\PP\bigl(|A_u| < \mu/2\bigr) \le e^{-\mu/8}
\quad\text{and}\quad
\PP\bigl(|A_v| < \mu/2\bigr) \le e^{-\mu/8}.
\]

Conditioning on $A_u,A_v$, the probability that there is no edge between $A_u,A_v$ is $(1-p)^{\abs{A_u}\cdot\abs{A_v}}$. Note that this is at least $(1-p)^{\mu^2/4}\leq e^{-p\mu^2/4}$ if both $\abs{A_u}$ and $\abs{A_v}$ are at least $\mu/2$. Therefore, the probability that the triple $(\{u,v\},S,F)$ fails to find the vertices $a_{uv},b_{uv}$ is at most 
\[2e^{-\mu/8}+e^{-p\mu^2/4}\leq 2e^{-pn/32}+e^{-p^3n^2/64}\leq e^{-pn/100}\]
when $n$ is sufficiently large.
Note that there are at most 
\[
\binom{n}{k} \binom{k}{2}\sum_{r=k}^{k^2} \binom{n}{r}
 \le k^4n^{k+k^2}\leq n^{2k^2}
\]
such triples. Therefore, the probability that $G$ is good is at least
\[1-n^{2k^2} e^{-pn/100}\geq 1-e^{-pn/200}\]
when $n$ is sufficiently large. Here, we used the fact that $2k^2\log n\leq 2p^2n\log n\leq pn/200$.
\end{proof}

Now, it remains to show that we can inductively find a clean subdivision for every $S$ if $G$ is good.

\begin{lemma}\label{lem:good}
    If $G$ is good, then for any $S\subseteq V(G)$ of size $k$, there exists a clean subdivision of $S$ in $G$.
\end{lemma}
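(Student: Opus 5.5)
We build the clean subdivision greedily, one pair at a time, and use goodness of $G$ at each step with the set $F$ equal to $S$ together with everything chosen so far. Fix $S$ with $\abs{S}=k$ and enumerate $\binom{S}{2}$ as $e_1,\dots,e_m$, where $m=\binom{k}{2}$. We maintain a set $R_t$ of already chosen vertices $a_{e_i},b_{e_i}$ for $i\le t$, and we set $F_t\eqdef S\cup R_t$. The size bound is what allows goodness to apply: $\abs{F_t}\le k+2t\le k+2\binom{k}{2}=k^2$ for all $t\le m$. In particular this holds at $t=m-1$, which is the last set to which we apply goodness.

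For the step $t\to t+1$, write $e_{t+1}=\{u,v\}$ and apply goodness to the triple $(\{u,v\},S,F_t)$; this is legitimate because $\{u,v\}\subseteq S\subseteq F_t$ and $\abs{F_t}\le k^2$. We obtain $a_{uv},b_{uv}\notin F_t$ such that $u,a_{uv},b_{uv},v$ is a path. Moreover, the only edges from $\{a_{uv},b_{uv}\}$ to $F_t$ are $ua_{uv}$ and $b_{uv}v$; in particular $ub_{uv}$ and $va_{uv}$ are absent. The edge $a_{uv}b_{uv}$ lies inside the new pair, and it is the middle edge of the path. We set $R_{t+1}=R_t\cup\{a_{uv},b_{uv}\}$. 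The new vertices are distinct from all previously chosen ones and lie outside $S$.

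To verify cleanness of $R=R_m$, consider any edge of $G[S\cup R]$ that is not in $G[S]$. Such an edge has at least one endpoint in $R$. Let $x$ be the endpoint that was added latest, say $x\in\{a_{e_j},b_{e_j}\}$. The other endpoint $y$ either lies in $F_{j-1}$ or is the partner of $x$ in the same pair. In the first case, goodness at step $j$ forces the edge to be $ua_{e_j}$ or $b_{e_j}v$, where $e_j=\{u,v\}$. In the second case the edge is $a_{e_j}b_{e_j}$. So every such edge is one of the three path edges of its pair. Conversely, all the path edges are present by construction. Hence $R$ is a clean subdivision of $S$.

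The only delicate points are the bookkeeping that $\abs{F_t}\le k^2$ throughout, and the observation that edges between two pairs chosen at different steps are ruled out by the \emph{later} step. Both are routine given the definition of good.
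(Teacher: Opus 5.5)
Your proof is correct and follows the paper's argument: you choose $a_{uv},b_{uv}$ greedily, pair by pair, and apply goodness to the set of everything fixed so far, whose size stays at most $k+2\binom{k}{2}=k^2$. Your choice $F_t=S\cup R_t$ is slightly more careful than the paper's $F_i=\{u_j,v_j,a_{u_jv_j},b_{u_jv_j}\mid j<i\}$, which need not contain $S$ as the definition of good requires, and your explicit check that edges between different pairs are excluded by the later step fills in what the paper leaves implicit.
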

\begin{proof}
    For any $S\subseteq V(G)$ with $\abs{S}=k$, we build its clean subdivision inductively. We fix an arbitrary order $\{u_1,v_1\},\dots,\{u_{\binom{k}{2}},v_{\binom{k}{2}}\}$ of $\binom{S}{2}$. In the $i$-th round, suppose that we already found $a_{u_jv_j},b_{u_jv_j}$ for each $j<i$. We may use the definition of $G$ being good with $u_i,v_i$ in place of $u,v$ and $F_i=\{u_j,v_j,a_{u_jv_j},b_{u_jv_j}\mid j<i\}$ in place of $F$. Note that $\abs{F_i}\leq k+2\binom{k}{2}=k^2$. Thus, we can conclude that there exist $a_{u_iv_i},b_{u_iv_i}$ such that $u_i,a_{u_iv_i},b_{u_iv_i},v_i$ form a path of length three, and these are the only edges between $\{a_{u_iv_i},b_{u_iv_i}\}$ and $F_i$. After running all $\binom{k}{2}$ rounds of this process, we can conclude that $S$ admits a clean subdivision.
\end{proof}

Combining all the lemmas above, we get \cref{thm:openai} \ref{thm:openai_part_a} immediately.
\begin{proof}[Proof of \cref{thm:openai} \ref{thm:openai_part_a}]
    From \cref{lem:sparse-reservoir}, we know that $G$ is good with probability at least $1-e^{-pn/200}$. From \cref{lem:good,lem:toggling}, we know that a good graph $G$ is vertex minor universal. This completes the proof.
\end{proof}

It remains to prove \cref{thm:openai} \ref{thm:openai_part_b} where $G\sim G(n,1-p)$. For any set $S$, by applying \cref{lem:sparse-reservoir} to the complement $\overline{G}\sim G(n,p)$, we can find a clean subdivision $R$ of $S$ in $\overline{G}$ with high probability. If there exists a common neighbor $z_F$ of all the vertices in $F=R\cup S$, then we can apply a local complementation at $z_F$, so that $R$ is a clean subdivision of $S$ in $G* z_F$.
\begin{lemma}\label{lem:common_neighbor}
Assume that $n$ is sufficiently large, $1/\sqrt{n}\leq p\leq n^{-1/3}/2$, and $G\sim G(n,1-p)$.
Then the following holds with probability at least $1-e^{-n/8}$. For any set $F\subseteq V(G)$ with $\abs{F}=k^2$, there exists a common neighbor $z_F$ of all the vertices in $F$.
\end{lemma}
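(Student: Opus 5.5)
The plan is a first-moment union bound over all sets $F$ of size $k^2$. First fix such an $F$. For any vertex $z\notin F$, the events $\{zx\in E(G)\}$ for $x\in F$ are independent, each with probability $1-p$. So $z$ is adjacent to all of $F$ with probability $(1-p)^{k^2}$. Since $k=\floor{p\sqrt n}$, we have $k^2\le p^2n$. Bernoulli's inequality then gives
\[(1-p)^{k^2}\ge 1-pk^2\ge 1-p^3n\ge 1-1/8=7/8,\]
using $p\le n^{-1/3}/2$, so that $p^3n\le 1/8$. In particular this success probability is at least $1/2$ for every candidate $z$.

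The events ``$z$ is a common neighbor of $F$'' for different $z\in V(G)\setminus F$ depend on disjoint edge sets, namely the edges from $z$ to $F$. They are therefore mutually independent. So the probability that no $z\notin F$ is a common neighbor is at most
\[\bigl(1-(1-p)^{k^2}\bigr)^{n-k^2}\le (1/8)^{n-k^2}.\]
We have $k^2\le p^2n\le n^{1/3}/4=o(n)$, so $n-k^2\ge n/2$ for large $n$. Hence the failure probability for a fixed $F$ is at most $8^{-n/2}=2^{-3n/2}$.

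Next, union bound over the at most $\binom{n}{k^2}\le n^{k^2}=2^{k^2\log_2 n}$ choices of $F$. Since $k^2\log_2 n\le n^{1/3}\log_2 n/4=o(n)$, the total failure probability is at most $2^{o(n)-3n/2}\le e^{-n/8}$ for $n$ sufficiently large, because $2^{-3n/2}=e^{-(3\ln 2/2)n}$ and $3\ln 2/2\approx 1.04>1/8$.

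There is no real obstacle here. The only points needing care are the independence across different $z$ (the edge sets are disjoint) and checking that $k^2=o(n/\log n)$, which follows from $p\le n^{-1/3}/2$. Sets $F$ smaller than $k^2$ are not needed, since the statement only concerns $\abs{F}=k^2$. In applications, any smaller set can be padded to size $k^2$, which only strengthens the conclusion.
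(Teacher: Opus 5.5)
Your proposal is correct and follows essentially the same argument as the paper: Bernoulli's inequality gives a constant lower bound on $(1-p)^{k^2}$, candidate vertices fail independently, and you finish with a union bound over the at most $\binom{n}{k^2}\le n^{k^2}$ choices of $F$. The only differences are that you use the sharper per-vertex bound $7/8$ where the paper uses $1/2$, and that you state explicitly the independence across $z$, which the paper leaves implicit.
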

\begin{proof}
    We union bound over all possible $F$. For any fixed $F$ and any $x\in V(G)\setminus F$, the probability that $x$ is a common neighbor of all the vertices in $F$ is $(1-p)^{k^2}\geq 1-k^2p\geq 1-p^3n\geq 1/2$. Therefore, with probability at most $2^{-(n-k^2)}\leq e^{-n/4}$, there does not exist a common neighbor $z_F$ of all the vertices in $F$. Note that there are $\binom{n}{k^2}\leq n^{k^2}$ such $F$, so the probability that $z_F$ exists for all $F$ is at least
    \[1-n^{k^2}e^{-n/4}\geq 1-e^{-n/8}.\]
    Here, we used the fact that $k^2\log n\leq p^2n\log n\leq n/8$ when $n$ is large enough.
\end{proof}

Now, we are ready to prove \cref{thm:openai} \ref{thm:openai_part_b}.
\begin{proof}[Proof of \cref{thm:openai} \ref{thm:openai_part_b}]
From \cref{lem:common_neighbor,lem:sparse-reservoir}, the probability that both $G$ satisfying the conclusion in \cref{lem:common_neighbor} and $\overline{G}$ being good is at least $1-e^{-pn/200}-e^{-n/8}\geq 1-2e^{-pn/200}$. Indeed, these two conditions imply that $G$ is vertex minor universal. For any set $S\subseteq V(G)$ of size $k$, there exists a clean subdivision $R$ of $S$ in $\overline{G}$. By the conclusion of \cref{lem:common_neighbor}, there exists a common neighbor $z_F$ of all vertices in $F=R\cup S$. We know that $R$ is also a clean subdivision of $S$ in $G*z_F$. Thus, by \cref{lem:toggling}, we know that any graph on the vertex set $S$ is a vertex minor of $G*z_F$, and hence a vertex minor of $G$. This completes the proof.
\end{proof}

\end{document}